 \newtheorem{thm}{Theorem}[section]
 \newtheorem{prop}[thm]{Proposition}
\theoremstyle{definition}
 \newtheorem{rem}[thm]{Remark}
 \newtheorem{definition}{Definition}[section]
\numberwithin{equation}{section}
\newcommand\mj{\mbox{\bf 1}}
\def\d#1{{#1\kern-0.4em\char"16\kern-0.1em}}
\def\D#1{{\raise0.2ex\hbox{-}\kern-0.4em#1}}
\def \Dj{\mbox{\raise0.3ex\hbox{-}\kern-0.4em D}}
\definecolor{britishracingzelena1}{rgb}{0.0, 0.26, 0.15}
\def\zn{,\kern-0.09em,}
\title{A diagrammatic presentation of the category 3Cob}
\author[Nikoli\' c]{Jovana Nikoli\' c$^{\ast}$}
\address{\scriptsize{Faculty of Mathematics, University of Belgrade}}
\email{jovana.nikolic@matf.bg.ac.rs}
\thanks{$^\ast$Corresponding author email: jovana.nikolic@matf.bg.ac.rs}
\author[Petri\' c]{Zoran Petri\' c}
\address{\scriptsize{Mathematical Institute SANU\\ Knez Mihailova 36, p.f.\ 367\\ 11001 Belgrade, Serbia}}
\email{zpetric@mi.sanu.ac.rs}
\author[Zeki\' c]{Mladen Zeki\' c}
\address{\scriptsize{Mathematical Institute SANU\\ Knez Mihailova 36, p.f.\ 367\\ 11001 Belgrade, Serbia}}
\email{mzekic@mi.sanu.ac.rs}
\date{}
\begin{document}

\begin{abstract}
A category equivalent to the category of 3-dimensional cobordisms is defined in terms of planar diagrams. The operation of composition in this category is completely described via these diagrams.

\vspace{.3cm}

\noindent {\small {\it Mathematics Subject Classification} ({\it
        2020}): 18B10, 57N70, 18M30, 18M05}

\vspace{.5ex}

\noindent {\small {\it Keywords$\,$}: knots, links, surgery, Kirby's calculus, 3-manifolds with boundary, gluing}
\end{abstract}

\maketitle
\section{Introduction}

The category 3Cob has 2-dimensional, closed, oriented manifolds as objects and 3-dimensional cobordisms as arrows.
By a diagrammatic presentation of this category, we mean the following three things:
\begin{enumerate}
\item a language of diagrams with expressive power sufficient to present all the arrows of 3Cob;
\item a complete calculus telling us whether two diagrams present the same cobordism;
\item an operation on diagrams that corresponds to the composition of cobordisms.
\end{enumerate}

This paper covers all the above. Our language is based on the surgery description of closed manifolds introduced by Wallace, \cite{W60} and Lickorish, \cite{L62}. There are other diagrammatical languages for 3Cob. For the standard category of cobordisms, the most important is the one introduced by Turaev, \cite{T10}, and we mention also the work of Kerler, \cite{K99}, Sawin, \cite{S04}, and Juhasz, \cite{J18}. Our intention was to make an extension of the language of surgery in a form as simple as possible. This language is introduced in Section~\ref{standard diagrams} and its interpretation is explained in Section~\ref{interpretation}. Our diagrams consist just of circles and wedges of circles. The interpretation of such a diagram as a cobordism is straightforward.

We use the results from \cite{FGOP} to establish a calculus of moves, analogous to Kirby's calculus, \cite{K78}, which is complete in the sense that two diagrams present the same cobordism if and only if there is a finite sequence of moves transforming one diagram into the other. A discussion on such calculi is given in Section~\ref{calculus}.

The main topic of this paper is how to ``compose'' the diagrams. We are aware of Sawin's paper \cite{S04}, where a composition of diagrams is presented, in a very elegant way, by a sketch in Figure~9. We tried, but could not prove its correctness in the context of our diagrams, and when we tested the mending rule it went wrong. Maybe it is just a matter of misunderstanding. However, we believe that composition of diagrams requires more subtleties, and we present our approach in Section~\ref{composition}, while Section~\ref{sigmags1} serves to prepare the ground for this.

Such a diagrammatic presentation of 3Cob is important for us since our ongoing project is to establish how faithful a 3-dimensional TQFT could be. We believe that the simplicity of this presentation could make construction and analysis of 3-dimensional TQFT's more available. At least, it could shed a new light to these matters.

As a side product of our investigations, one finds possibility to use our diagrams for coherence questions in category theory. Sometimes diagrammatical (or graphical) languages used to express some coherence results combine graphs with boxes containing some extra information. For example, such are graphs related to categorical quantum protocols (see \cite{S07} and \cite{DPZ}). By replacing 1-dimensional strings (1-dimensional cobordisms) with 3-dimensional cobordisms whose boundary consists of two components of the same genus greater than 0, one can skip the role of boxes and present everything in completely geometrical terms.

We give an example to illustrate how we present cobordisms by diagrams and how we compose them. Consider the two diagrams illustrated in Figure~\ref{CD}.

\begin{figure}[!h]
\centering
\includegraphics{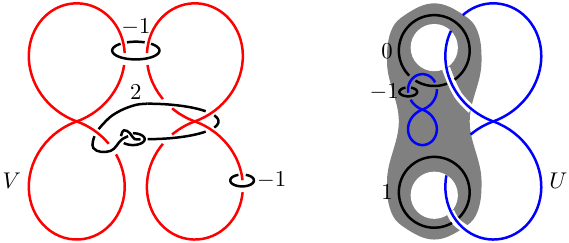}
\caption{The cobordisms $D$ and $C$}
\label{CD}
\end{figure}

The left-hand side diagram  is interpreted as a manifold so that two wedges of circles are thickened and their interiors are removed from $S^3$ in order to form two boundary components (genus 2 surfaces). Then a surgery according to the framed link consisting of three components is performed in a standard manner. The red colour of wedges indicates that the boundary components should be incoming. For the embedding of the source of the cobordism $D$ presented by this diagram it matters how the two boundary components could be identified, and this identification is ``on the nose'', i.e., what comes to mind first, according to the shapes of the corresponding wedges. (All this will become precise in Sections~\ref{standard diagrams} and \ref{interpretation}.)

The interpretation of the right-hand side diagram is analogous, apart from the fact that the blue colour of wedges of circles indicates that the corresponding boundary components should be outgoing. Again, the embedding of the target of the cobordism $C$ presented by this diagram will be precisely defined later, and at this point it only matters that the following diagram presents the identity cobordism on a genus 2 surface.
\begin{figure}[!h]
\centering
\includegraphics{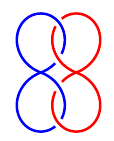}
\end{figure}

The composition $D\circ C$ (in which the boundary components corresponding to wedges labeled by $U$ and $V$ are identified, and analogously for unlabeled wedges) results in a closed manifold and its diagram is obtained in the following manner. We start with gluing two cobordisms along the boundary components labeled by $U$ and $V$. This is done by placing the diagram for $C$ in a handlebody indicated by shaded zone in Figure~\ref{CD}, and this handlebody is linked with the blue wedge labeled by $U$ as illustrated in this figure. This handlebody together with the diagram inside is moved so to form a neighbourhood of the wedge marked by $V$ in the diagram for $D$. In this way we obtain the diagram illustrated at the left-hand side of Figure~\ref{E}.

\begin{figure}[!h]
\centering
\includegraphics{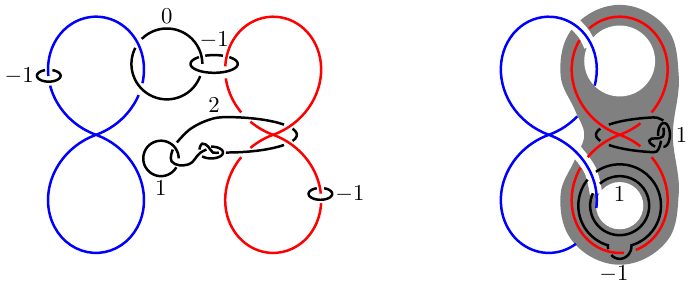}
\caption{Gluing $C$ and $D$ along $U$ and $V$}
\label{E}
\end{figure}

By using a variant of Kirby's calculus (see Section~\ref{calculus}), this diagram is transformed so that the blue and the red wedge are linked as in the diagram for the identity shown above (see the right-hand side of Figure~\ref{E}). The cobordism presented by this diagram has one incoming and one outgoing boundary component (marked in red and blue, respectively) and $D\circ C$ is a result of self-gluing along these boundaries. The diagram presenting $D\circ C$ is obtained by removing these two wedges and by inserting a diagram of the form illustrated in Figure~\ref{borromean}.
\begin{figure}[h!]
\centering
\includegraphics{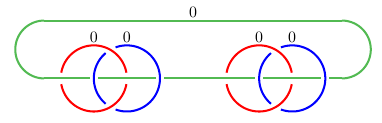}
\caption{} \label{borromean}
\end{figure}

The insertion of this diagram has to respect the shape of the handlebody presented by the shaded region in the diagram at the right-hand side of Figure~\ref{E}. Our diagram for the closed manifold $D\circ C$ is illustrated in Figure~\ref{F}.

\begin{figure}[!h]
\centering
\includegraphics[width=3.2cm]{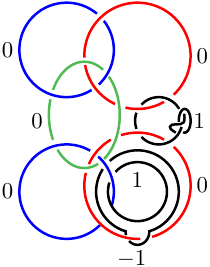}
\caption{A diagram for $D\circ C$}
\label{F}
\end{figure}

By a \emph{manifold} we mean a compact and oriented 3-manifold possibly with boundary. If not specified, one may assume that it is connected. We consider $S^3$ as an Alexandroff (one-point) compactification of $R^3$. The orientation of $S^3$ is fixed and it is assumed that the induced orientation of $R^3$ is right-handed. We presuppose some basic knowledge in surgery of manifolds which can be found in \cite{L62,R76,PS}.

\section{The category 3Cob}\label{diagrammatics}
The objects of the category 3Cob are closed, oriented surfaces. The arrows are 3-dimensional cobordisms consisting of a manifold $M$  together with two closed, oriented surfaces $\Xi_0$ (the \emph{source}) and $\Xi_1$ (the \emph{target}) and two embeddings $\varphi_0\colon \Xi_0\to M$ and $\varphi_1\colon \Xi_1\to M$,  whose images are disjoint and constitute $\partial M$. Moreover, taking the orientation of $\partial M$ to be induced by the orientation of $M$, $\varphi_0$ is orientation preserving and $\varphi_1$ is orientation reversing. We call the image of $\varphi_0$ the \emph{incoming} boundary, and the image of $\varphi_1$ the \emph{outgoing} boundary of $M$.

Two cobordisms $(M,\varphi_0,\varphi_1)$ and $(M',\varphi'_0,\varphi'_1)$ with the same source and target are considered to be equal when there exists an orientation preserving homeomorphism $w\colon M\to M'$ such that the following diagram commutes.
\begin{center}
\begin{tikzcd}
\Xi_0\arrow[hook]{r}{\varphi_0}\arrow[hook]{dr}[swap]{\varphi'_0}
&M \arrow{d}{w} \arrow[hookleftarrow]{r}{\varphi_1} & \Xi_1 \arrow[hook]{ld}{\varphi'_1}\\
&M'
\end{tikzcd}
\end{center}

\begin{rem}\label{isotopy1}
If $(M,\varphi_0,\varphi_1)$ and $(M,\varphi'_0,\varphi'_1)$ are such that $\varphi'_0\circ \varphi^{-1}_0$ and $\varphi'_1\circ \varphi^{-1}_1$ are isotopic to the identities, then these two cobordisms are equal.
\end{rem}

If the target of $(M,\varphi_0,\varphi)$ and the source of $(N,\theta,\theta_1)$ is the same surface $\Xi$, then their \emph{composition} is the cobordism whose underlying manifold is $(M\sqcup N)/_\sim$, where $\sim$ is such that
\begin{equation}\label{sim}
   \forall x\in\Xi \quad (\varphi(x),1)\sim (\theta(x),2).
\end{equation}
The source and the target embeddings of the resulting cobordism are derived from $\varphi_0$, $\theta_1$ and embeddings of $M$ and $N$ into $(M\sqcup N)/_\sim$.

For every object $\Xi$, the \emph{identity} arrow $\mj_\Xi\colon \Xi\to\Xi$ is defined as
\begin{center}
\begin{tikzcd}
\Xi\arrow[hook]{r}{\varphi_0}
&\Xi\times I \arrow[hookleftarrow]{r}{\varphi_1} & \Xi,
\end{tikzcd}
\end{center}
where for every $x\in\Xi$, $\varphi_0(x)=(x,0)$ and $\varphi_1(x)=(x,1)$. According to the product orientation, the embedding $\varphi_1$ is orientation reversing. The category 3Cob is equipped with a symmetric monoidal structure in which the tensor product is the disjoint union.

Let $C$ and $D$ be cobordisms whose underlying manifolds are $M_C$ and $M_D$, respectively. Let $\Sigma$ be a closed surface common to the target of $C$ and to the source of $D$, such that $\varphi\colon\Sigma\to M_C$ and $\theta\colon\Sigma\to M_D$ are parts of the target and the source embeddings. We define \emph{gluing} of $C$ and $D$ along the outgoing and incoming components of boundaries corresponding to $\varphi$ and $\theta$ to be the cobordism whose underlying manifold is $(M_C\sqcup M_D)/_\sim$, where $\sim$ is such that
\begin{equation}\label{sim}
   \forall x\in\Sigma \quad (\varphi(x),1)\sim (\theta(x),2).
\end{equation}
The source and the target embeddings of the resulting cobordism are derived from the source and the target embeddings of $C$ and $D$ (with $\theta,\varphi$ omitted) and embeddings of $M$ and $N$ into $(M\sqcup N)/_\sim$. It is obvious that composition is just a special case of gluing. On the other hand, every gluing could be performed by using tensor product with identities, symmetry and composition.

Let $C$ be a cobordism, with $M$ as the underlying manifold, such that a closed surface $\Sigma$ occurs as a part of its source and its target. Let $\varphi\colon\Sigma\to M$ and $\theta\colon\Sigma\to M$ be parts of the target and the source embeddings. We define \emph{mending} of $C$, along the outgoing and incoming components of the boundary corresponding to $\varphi$ and $\theta$, to be the cobordism whose underlying manifold is $M/_\sim$, where $\sim$ is such that
\begin{equation}\label{sim}
   \forall x\in\Sigma \quad \varphi(x)\sim \theta(x).
\end{equation}
The source and the target embeddings of the resulting cobordism are obtained by removing $\theta$ and $\varphi$ from the corresponding embeddings of the source and the target of $C$.

\vspace{2ex}

For every $g\geq 0$, we specify one closed, connected and oriented surface $\Sigma_g$ of genus $g$. Every object of 3Cob is isomorphic to a  finite sequence of such chosen surfaces (here we rely on the amphicheiral nature of surfaces). Hence by restricting the objects of 3Cob to such sequences one obtains an equivalent category. By abusing the notation, we denote this category also by 3Cob.

The symmetric monoidal structure of this category is strict monoidal with tensor product being concatenation. The symmetry arrows $\sigma_{\Xi,\Theta}\colon \Xi,\Theta\to \Theta,\Xi$, could be replaced by the following operation on arrows. Let
\[
C=(M,[\varphi_1,\ldots,\varphi_m],[\theta_1,\ldots\theta_n])
\]
be an arrow of 3Cob, whose source is  $(\Sigma_{i_1},\ldots,\Sigma_{i_m})$, and whose target is  $(\Sigma_{j_1},\ldots,\Sigma_{j_n})$. For every $k$, we have that $\varphi_k$ and $\theta_k$ are embeddings of $\Sigma_{i_k}$ and $\Sigma_{j_k}$, respectively. For $\pi$ a permutation of $\{1,\ldots,m\}$ and $\tau$ a permutation of $\{1,\ldots,n\}$, we define $C^\pi_\tau$ to be the arrow
\[
(M,[\varphi_{\pi(1)},\ldots,\varphi_{\pi(m)}],[\theta_{\tau(1)}, \ldots\theta_{\tau(n)}]),
\]
whose source is $(\Sigma_{i_{\pi(1)}},\ldots,\Sigma_{i_{\pi(m)}})$, and whose target is  $(\Sigma_{j_{\tau(1)}},\ldots,\Sigma_{j_{\tau(n)}})$.

\subsection{Diagrams for 3Cob}\label{standard diagrams}

A diagrammatic language for presenting manifolds introduced in \cite{FGOP} serves as a base for diagrammatic presentation of the arrows of 3Cob. For our purposes, this language is slightly modified. A diagram is embedded in $R^3$ and it consists of a finite set of wedges of oriented circles and a framed link (called \emph{surgery data}). The wedges of circles in a diagram are separated into a \emph{positive} and a \emph{negative} sequence. For the sake of better visualisation, we mark positive wedges in red and negative in blue (see Figure~\ref{dijagram2}). The ordering of wedges will be not indicated in the illustrations below since it is irrelevant for our examples.

\begin{figure}[!h]
\centering
\includegraphics{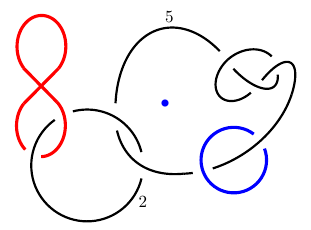}
\caption{A diagram for a cobordism}
\label{dijagram2}
\end{figure}

Let $\pi$ be the $xy$-plane. We may assume that every diagram lives in a narrow tubular neighbourhood of this plane. It is assumed that $\pi$ satisfies standard conditions listed in \cite[Paragraph preceding Figure~1.2]{PS} with respect to the diagram components (excluding the common points of circles in wedges). A diagram is projected to this plane and presented as a planar diagram in which the ``under'' and ``over'' crossings are taken with respect to $z$-coordinates.  We will not make a distinction between diagrams in $R^3$ and their planar projections. Such diagrams will be called \emph{cobordism diagrams} or just \emph{diagrams}.

We have the following geometrical assumptions concerning the wedges of $g\geq 2$ circles. Every such wedge is equipped with a ball whose center is the common point of the circles. This point is called the \emph{center} of the wedge. The parts of the circles of this wedge inside the ball are radial and parallel to $\pi$. One radius corresponding to the $i$-th circle in the wedge is outgoing (following the orientation of the circle), and the other is incoming. It is assumed that the outgoing radii corresponding to the $i$-th circle for all wedges of $g$ circles are parallel and codirected. The same holds for the incoming radii. The  pairs of radii corresponding to one circle are consecutive.

The circles belonging to one wedge are unknotted, unlinked and their projections do not cross each other. Moreover, these projections are oriented counterclockwise. In most situations, the wedges in a diagram will be unlinked as in Figure~\ref{dijagram2}. For example, the wedges of three circles are assumed to be of the form illustrated in Figure~\ref{wedges form}. However, we will not draw the balls associated with wedges in our illustrations.
\begin{figure}[!h]
\centering
    \includegraphics{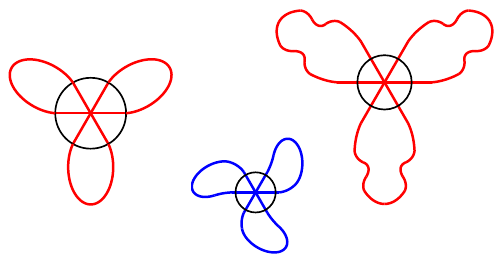}
\caption{Wedges of three circles}\label{wedges form}
\end{figure}

For every wedge $W$ of circles in a diagram one defines a handlebody $H_W$ (disjoint from the rest of the diagram and from the other such handlebodies) in the following way. If $W$ is a wedge of zero circles, then $H_W$ is a ball containing $W$ in its interior. If $W$ is a wedge of a single circle, then $H_W$ is the closure of a tubular neighbourhood of $W$. In the case when $W$ contains more than one circle and $B$ is its associated ball, then $H_W$ has $B$ as its 0-handle and its 1-handles are the closures of tubular neighbourhoods of the parts of the circles of $W$ lying outside $B$. The orientation of $H_W$ is induced by the orientation of $R^3$. The interior of $H_W$ is called the \emph{chosen neighbourhood} of $W$ (see Figure~\ref{Fig:Handlebody}). This neighbourhood corresponds to the notion of graphical neighbourhood in terminology of \cite[Definition~6]{FH19}, which is appropriate for smooth category.

\begin{figure}[!h]
\centering
\includegraphics{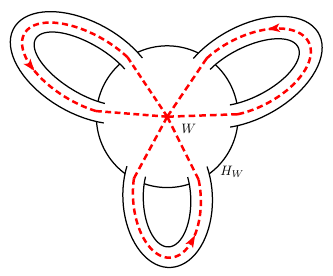}
\caption{Wedge of three circles and its chosen neighbourhood}
\label{Fig:Handlebody}
\end{figure}

For every $g\geq 0$, we fix one wedge $W_g$ of $g$ circles in $R^3$. The handlebody $H_{W_g}$, which we abbreviate by $H_g$, is obtained as above. Let the boundary of $H_g$ be the chosen representative $\Sigma_g$ of the homeomorphism class of surfaces of genus $g$, and let the orientation of $\Sigma_g$ be the opposite to its orientation induced by the orientation of $H_g$.

The surface $\Sigma_g$ is equipped with $2g$ circles, one pair, consisting of an $a$-circle and a $b$-circle, for each handle (see Figure~\ref{dijagram0}). The common point of $a$ and $b$ circles is the \emph{base point} of $\Sigma_g$. We assume that the $a$-circles belong to the 0-handle of $H_g$. By cutting $\Sigma_g$ along the $a$ and $b$-circles one obtains a polygon $\Pi_g$, with $4g$ sides, as in Figure~\ref{dijagram0}.

\begin{figure}[!h]
\centering
\includegraphics{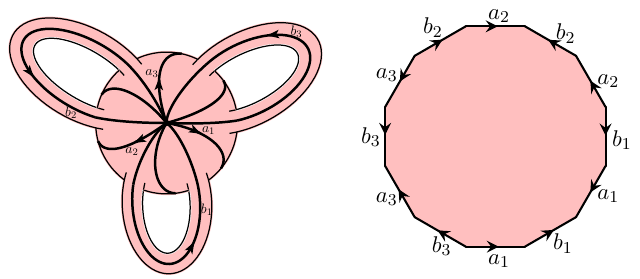}
\caption{$\Sigma_3$ and $\Pi_3$}
\label{dijagram0}
\end{figure}

The common point of $a$ and $b$ circles is the \emph{base point} of $\Sigma_g$. We assume that the $a$-circles belong to the 0-handle of $H_g$. By cutting $\Sigma_g$ along the $a$ and $b$-circles one obtains a polygon $\Pi_g$, with $4g$ sides, as in Figure~\ref{dijagram0}.

For every $g\geq 0$, we define (up to isotopy) a canonical orientation reversing homeomorphism $R_g\colon\Sigma_g\to \Sigma_g$. If $g=0$, then $R_0$ is defined as an arbitrary orientation reversing homeomorphism of $S^2$, since they are all isotopic. If $g>0$, then we use $\Pi_g$ (see Figure~\ref{dijagram0}) in order to define $R_g$. Let $R_g\colon\Sigma_g\to \Sigma_g$ be induced by a homeomorphism of $\Pi_g$ that identifies the polygonal line $a_1b_1a_1b_1a_2b_2\ldots$ with the polygonal line $b_1a_1b_1a_1b_ga_gb_ga_g\ldots$ respecting the orientation of edges (see Figure~\ref{dijagram1}). In the terminology of \cite{L88}, this is the \emph{reversion} $R=[b_1,a_1,b_g,a_g,\ldots,b_2,a_2]$.
\begin{figure}[!h]
\centering
\includegraphics{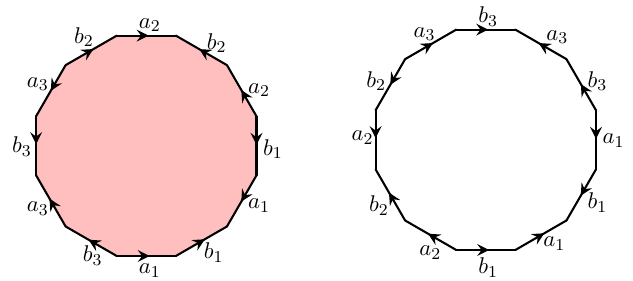}
\caption{$R_g\colon\Sigma_g\to\Sigma_g$}
\label{dijagram1}
\end{figure}

From Remark~\ref{isotopy2} it follows that a homeomorphism of $\Pi_g$ is determined, up to isotopy, by its restriction to the boundary polygonal line. Note that for $g\geq 3$ it is not the case that for every handle its $a$-circle and $b$-circle just switch the roles.

\begin{rem}\label{isotopy2}
Every homeomorphism from $D^2$ to itself which is identity on the boundary is isotopic to the identity.
\end{rem}

\subsection{The interpretation}\label{interpretation}

The advantage of a diagram in $R^3$ is that it denotes not merely a manifold and its source and target, but also an embedding of the source and an embedding of the target, i.e. a complete cobordism. Let $\mathcal{D}$ be a cobordism diagram and let $W$ be a wedge of $g$ circles in it. A homeomorphism from $(H_g,W_g)$ to $(H_{W},W)$, which preserves the orientation of handlebodies and of circles in wedges is called \emph{regular} when
\begin{enumerate}
\item it respects the handle-structure;
\item its restriction to the 0-handle is a composition of a translation and a dilation;
\item its restriction to 1-handles is such that the image of each $b$-circle has the linking number 0 with the corresponding circle of $W$.
\end{enumerate}
We call the restriction of a regular homeomorphism to the boundary of $H_g$, a \emph{regular embedding} of the surface $\Sigma_g$ into the boundary of $H_W$. This notion helps us to define the embedding of the source and the target of a cobordism presented by a diagram.

\begin{rem}\label{neighbourhood2}
By our assumption concerning the form of wedges of circles (see Figure~\ref{wedges form}), for a chosen $H_W$ and a circle embedded in $H_W-W$, we can choose inside the interior of $H_W$, a handlebody $H'_W$ containing $W$, which is disjoint from this circle. Moreover, $H'_W$ is such that there exists a regular homeomorphism from $(H_g,W_g)$ to $(H'_{W},W)$. We will use this property in Sections~\ref{handlebody} and \ref{mending}.
\end{rem}

The first step in the interpretation of $\mathcal{D}$ consists in adding the infinity point in order to place the diagram in $S^3$. Next, we remove all the chosen neighbourhoods of wedges of circles and perform the surgery according to the framed link of the diagram. As a result one obtains a connected manifold~$M$. The orientation of $M$ is induced by the orientation of $S^3$.

The source (target) of the cobordism presented by $\mathcal{D}$ is the sequence of surfaces of the form $\Sigma_g$ corresponding to the positive (negative) sequence of wedges in the diagram. The embeddings of the members of the source into the incoming boundary of $M$ are the regular embeddings, while the embeddings of the members of the target into the outgoing boundary of $M$ are the regular embeddings precomposed by corresponding reversions. (Of course, several identifications starting with the embedding of $R^3$ into $S^3$, followed by identifications of a manifold obtained by removing a solid torus from a manifold with the parts of the manifold obtained by sewing back this solid torus, are hidden in such a description of source and target embeddings.) This concludes the interpretation of cobordism diagrams as arrows of 3Cob.

\begin{rem}
If $\varphi$ and $\varphi'$ are two regular embeddings of $\Sigma_g$ into the boundary of $H_W$, then $\varphi'\circ \varphi^{-1}$ is isotopic to the identity. Hence, by Remark~\ref{isotopy1}, the choice of a regular embedding of $H_g$ does not affect the resulting cobordism.
\end{rem}

It is straightforward to see that every isotopy of $R^3$ that keeps fixed all the wedges of circles in a diagram does not affect the interpretation of this diagram. Also, every isotopy that moves just a single wedge $W$ of circles (and keeps the rest of the diagram fixed) so that in every level the 0-handle of $H_W$ is moved just by translations and dilations, does not affect the interpretation of this diagram. Note that we should always keep the counterclockwise orientation of the projections of circles in wedges.
\begin{definition}
A \emph{wedge-rigid} isotopy of $R^3$ with respect to a diagram is a composition of isotopies of the two types above.
\end{definition}

\begin{rem}\label{identity diagrams}
The diagrams from Figure~\ref{identity1} present the identity arrows on $\Sigma_g$.  We call such a configuration the \emph{identity link of wedges}. Note that according to our convention, all the linking numbers are $+1$. Moreover, the enumeration of circles in a wedge is clockwise starting with the lowest. (This enumeration is sound with the definition of the reversion from Section~\ref{standard diagrams}.)
\end{rem}

\begin{figure}[!h]
\centering
\includegraphics{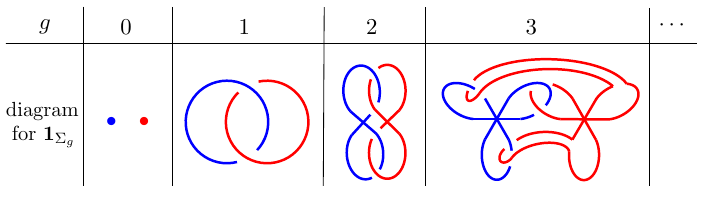}
\caption{The identity diagrams}
\label{identity1}
\end{figure}

\begin{proof}
Let $\mathcal{D}$ be an identity link of wedges with $g$ circles. As a manifold, $\mathcal{D}$ is interpreted as the complement (with respect to $S^3=R^3\cup\{\infty\}$) of the chosen neighbourhoods of these wedges. Denote this manifold by $M$. In Figure \ref{sjecenje} one can see the illustration for the case $g=2$ with images of the circles $a_i$ and $b_i$ indicated on both components of the boundary.

\begin{figure}[h!]
\centering
\includegraphics{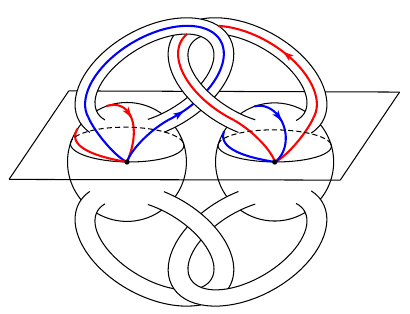}
\caption{The cutting plane}
\label{sjecenje}
\end{figure}

We have to show that there is a homeomorphism from $\Sigma_g\times I$ (see Figure~\ref{dijagram16} for the case $g=2$) to $M$ such that the component $\Sigma_g\times \{0\}$ is mapped so that the images of $a_i$ and $b_i$ are the blue and the red circle at the right-hand side of Figure \ref{sjecenje}, respectively, while the component $\Sigma_g\times \{1\}$ is mapped so that the images of $a_i$ and $b_i$ are the blue and the red circle at the left-hand side of Figure \ref{sjecenje}, respectively. We do this by cutting $\Sigma_g\times I$ by vertical cuts illustrated at the left-hand side of Figure~\ref{prizma0} (we denote the illustrated fragment by $(\Sigma_g\times I)_i$). The corresponding cut of the manifold $M$ is obtained by a plane in $R^3$ that contains the images of the base point at both boundary components (see Figure \ref{sjecenje}). After embedding into $S^3$ this plane becomes a sphere bounding a ball containing a fragment $M_i$ of $M$ illustrated at the right-hand side of Figure~\ref{prizma0}.

\begin{figure}[h!]
\centering
\includegraphics{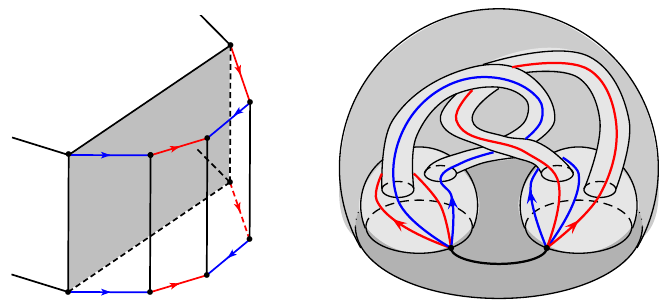}
\caption{The fragments $(\Sigma_g\times I)_i$ and $M_i$}
\label{prizma0}
\end{figure}

Consider the homeomorphism between the two images of the circle $a_i$ in the picture of $M_i$ (the blue loops based at the images of the base point), which stems from these two embeddings of $a_i$. Let $\alpha_i$ be the mapping cylinder of this homeomorphism, which is embedded in $M_i$. Analogously, let $\beta_i$ be the mapping cylinder embedded in $M_i$, which corresponds to the images of the circle $b_i$ in the picture of $M_i$ (the red loops based at the images of the base point). The embeddings of $\alpha_i$ and $\beta_i$ are such that they intersect each other just in a segment connecting the two images of the base point. This segment is illustrated in the picture of $M_i$ and it lies in the cutting plane (sphere). In the picture of $(\Sigma_g\times I)_i$, it corresponds to the vertical line segments. By cutting $M_i$ along $\alpha_i$ and $\beta_i$, one obtains a faceted ball with the same cell structure as the cell structure of $(\Sigma_g\times I)_i$. Hence, we have a homeomorphism between $(\Sigma_g\times I)_i$ and $M_i$ with the desired properties.

In the case $g>2$ one piece (the central one) of $\Sigma_g\times I$ remains uncovered by these homeomorphisms, but it contains neither $a$ nor $b$ curves. It is evidently homeomorphic to the remaining part of $M$ by a homeomorphism whose restriction to the cutting vertical rectangles coincides with the corresponding restrictions of the homeomorphisms from above. Hence, one can paste all these homeomorphisms into one with the desired properties.
\end{proof}

\begin{rem}\label{dehn1}
Let $M$ be a manifold with $\Sigma$ as a component of its boundary. Let $\theta\colon\Sigma\to\Sigma$ be a Dehn twist. A procedure introduced in \cite{L62} (explained in some details in \cite[Section~2]{FGOP}) shows how to ``immerse'' $\theta$ into $M$ in a form of a surgery along a knot. This surgery results in a manifold $M'$, with $\Sigma$ as a boundary component, for which there is a homeomorphism $h\colon M\to M'$ such that, for $\iota$ being the inclusion, the following diagram commutes:
\begin{center}
\begin{tikzcd}
\Sigma\arrow[hook]{r}{\iota}\arrow[hook]{dr}[swap]{\theta}
&M \arrow{d}{h}\\
&M'.
\end{tikzcd}
\end{center}
\end{rem}

We say that a cobordism (an arrow of 3Cob) is \emph{connected} when its underlying manifold is connected.

\begin{prop}\label{empty target}
Every connected cobordism with the empty target is presentable by a diagram with unlinked wedges.
\end{prop}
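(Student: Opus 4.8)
The plan is to build the diagram in two stages: first present the \emph{underlying manifold} of the given cobordism by a diagram with unlinked wedges, and then adjust the source embeddings to the prescribed ones. Write the cobordism as $C=(M,[\varphi_1,\dots,\varphi_m],[\,])$ with source $(\Sigma_{g_1},\dots,\Sigma_{g_m})$, so $\partial M=\bigsqcup_{k=1}^{m}\varphi_k(\Sigma_{g_k})$ has $m$ components of genera $g_1,\dots,g_m$. Place in $R^3$ pairwise unlinked wedges $W_1,\dots,W_m$, each $W_k$ a translated and dilated copy of the fixed wedge $W_{g_k}$, with associated handlebodies $H_{W_k}$ and regular embeddings $\rho_k\colon\Sigma_{g_k}\to\partial H_{W_k}$; put $N_0:=S^3\setminus\bigsqcup_k\mathrm{int}\,H_{W_k}$, a connected oriented manifold whose boundary agrees componentwise, as an oriented surface, with $\partial M$. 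By the surgery description of connected oriented $3$-manifolds with boundary (a relative form of the Wallace--Lickorish theorem; see \cite{FGOP}, also \cite{L62}), there exist a framed link $L\subseteq\mathrm{int}\,N_0$ and an orientation-preserving homeomorphism $h\colon(N_0)_L\to M$, where $(N_0)_L$ denotes the result of surgery along $L$. (If one instead has only a diagram with possibly linked wedges, the wedges can be unlinked at the cost of enlarging $L$ by moves of the type used in Kirby's calculus \cite{K78}, which leave the presented manifold fixed.) Since surgery in the interior does not touch $\partial N_0$, the diagram with the unlinked wedges $W_1,\dots,W_m$ and surgery data $L$ presents the cobordism $C'=\bigl((N_0)_L,[\rho_1,\dots,\rho_m],[\,]\bigr)$.

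Transporting $C'$ along $h$, it equals $\bigl(M,[h\circ\rho_1,\dots,h\circ\rho_m],[\,]\bigr)$. After reindexing so that $h$ carries $\partial H_{W_k}$ onto $\varphi_k(\Sigma_{g_k})$ (possible since corresponding components have equal genus), the composite $\delta_k:=\varphi_k^{-1}\circ h\circ\rho_k$ is an orientation-preserving self-homeomorphism of $\Sigma_{g_k}$, hence by the Dehn--Lickorish theorem isotopic to a finite product of Dehn twists. Each such Dehn twist (with suitable orientation) can be immersed, by Remark~\ref{dehn1}, as a surgery along a knot lying in a thin collar of $\partial H_{W_k}$ inside $(N_0)_L$; such a collar is contained in $S^3\setminus\bigsqcup_j H_{W_j}$ and, taken thin enough, is disjoint from $L$. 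Adjoining all these knots to $L$ produces a framed link $L'\subseteq S^3\setminus\bigsqcup_j H_{W_j}$ and an orientation-preserving homeomorphism $w\colon\bigl(S^3\setminus\bigsqcup_j\mathrm{int}\,H_{W_j}\bigr)_{L'}\to M$ with $w\circ\rho_k=\varphi_k$ for each $k$; since matching the source embeddings up to isotopy is enough (Remark~\ref{isotopy1}), the diagram with the unlinked wedges $W_1,\dots,W_m$ and surgery data $L'$ presents exactly $C$. Note that the knots adjoined in this stage are allowed to link the wedges --- only the wedges among themselves must be unlinked.

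The step I expect to be the main obstacle is the first one: one must make sure that the surgery description genuinely produces \emph{unlinked} wedges (or else carry out the unlinking by moves of the calculus while checking that the prescribed form of the wedges --- the radial structure near their centres and the counterclockwise orientation of the projected circles --- is preserved), and that surgery in $\mathrm{int}\,N_0$ affects neither the boundary surfaces nor the regular embeddings $\rho_k$. A subsidiary point is the bookkeeping in Remark~\ref{dehn1}: one must check that immersing the chosen Dehn twists inside a collar modifies the boundary identification by exactly the intended mapping class, and that the added surgery knots can be kept disjoint from all the handlebodies. The genus-$0$ components and the matching of equal-genus boundary components call for only a routine remark.
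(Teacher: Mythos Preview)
Your two-stage strategy---first present the underlying manifold by a diagram with unlinked wedges, then correct the boundary parametrisations by immersing Dehn twists as surgery knots via Remark~\ref{dehn1}---is exactly the paper's approach, and your second stage is essentially verbatim the paper's.

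The only substantive difference is in how Stage~1 is justified. The paper does not cite a ready-made ``relative Wallace--Lickorish'' statement; instead it uses a generalised Heegaard splitting $M=C\cup_\theta H$ into a compression body $C$ and a handlebody $H$ (from \cite{SSS}), embeds $C$ in $R^3$ so that the components of $\partial_-C$ bound unlinked standard handlebodies, and then invokes \cite[Proposition~2.7]{FGOP} to realise the gluing $\theta$ by a framed link. This is precisely what resolves the worry you flag at the end: the compression-body picture is what guarantees the wedges come out unlinked from the start, so no after-the-fact unlinking via calculus moves is needed. Your alternative suggestion of unlinking wedges by Kirby-type moves is not how the paper proceeds (and would need care, since Kirby moves act on the surgery link, not on the wedges). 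Otherwise your bookkeeping---matching boundary components by genus, keeping the added Dehn-twist knots in a collar disjoint from the handlebodies and the existing link, and invoking Remark~\ref{isotopy1} for isotopic embeddings---is sound and matches the paper.
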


\begin{proof}
Let $M$ be a connected manifold equipped with an orientation preserving homeomorphism $\varphi\colon \Xi\to \partial M$, where $\Xi$ is an object of 3Cob. By \cite[Theorem~3.1.10]{SSS}, there exist a compression body $C$ and a handlebody $H$ such that $M$ is a result of their gluing along a homeomorphism $\theta\colon\partial H\to\partial_+ C$. The compression body $C$ could be embedded in $R^3$ so that each component of $\partial_- C$ represents the boundary of $H_W$ for some wedge $W$ and moreover, all these components are unknotted, unlinked and their projections to the $xy$-plane do not overlap. By \cite[Proposition 2.7]{FGOP} there is a diagram $\mathcal{D}$ such that $M$ could be identified with the manifold obtained by the interpretation of $\mathcal{D}$. With this identification in mind, $\mathcal{D}$ presents the cobordism $(M,\kappa)$, where $\kappa\colon \Xi\to \partial M$ consists of regular embeddings. The condition on the components of $\partial_- C$ guarantees that the wedges in $\mathcal{D}$ are unlinked.

For the homeomorphisms $\varphi,\kappa\colon \Xi\to \partial M$ let $\delta\colon \partial M\to \partial M$ be the homeomorphism such that $\kappa=\delta\circ\varphi$. After decomposing $\delta$ in Dehn twists, by iterating the application of Remark~\ref{dehn1}, one can replace them with new surgery data added to $\mathcal{D}$ in order to obtain a diagram presenting the cobordism $(M,\varphi)$.
\end{proof}

\begin{prop}
Every connected cobordism is presentable by a diagram with unlinked wedges.
\end{prop}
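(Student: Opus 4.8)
The plan is to reduce the statement to Proposition~\ref{empty target} by ``bending'' the outgoing boundary onto the incoming side, and then to reverse this bending purely at the level of diagrams. Let $C=(M,[\varphi_1,\ldots,\varphi_m],[\theta_1,\ldots,\theta_n])$ be a connected cobordism with source $(\Sigma_{i_1},\ldots,\Sigma_{i_m})$ and target $(\Sigma_{j_1},\ldots,\Sigma_{j_n})$. Since each $\theta_k$ and each reversion $R_{j_k}\colon\Sigma_{j_k}\to\Sigma_{j_k}$ is orientation reversing, each composite $\theta_k\circ R_{j_k}$ is orientation preserving, so
\[
C'=(M,[\varphi_1,\ldots,\varphi_m,\theta_1\circ R_{j_1},\ldots,\theta_n\circ R_{j_n}],[\,])
\]
is a cobordism with underlying manifold $M$; it is connected and has empty target. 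By Proposition~\ref{empty target} there is a diagram $\mathcal{D}'$ with unlinked wedges presenting $C'$. Its target being empty, all wedges of $\mathcal{D}'$ are positive, and, reading the source of $C'$ off the positive sequence, the last $n$ of them are wedges of $j_1,\ldots,j_n$ circles, respectively, whose attached regular embeddings correspond --- via the identification of $M$ with the interpretation manifold, and up to composition with homeomorphisms isotopic to the identity --- to $\theta_1\circ R_{j_1},\ldots,\theta_n\circ R_{j_n}$, while the first $m$ correspond to $\varphi_1,\ldots,\varphi_m$.

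I would then form $\mathcal{D}$ from $\mathcal{D}'$ by recolouring those last $n$ wedges from positive to negative, leaving all of the geometry unchanged; then $\mathcal{D}$ has unlinked wedges and the same interpretation manifold $M$, its source is $(\Sigma_{i_1},\ldots,\Sigma_{i_m})$ with embeddings $\varphi_1,\ldots,\varphi_m$, and its target is $(\Sigma_{j_1},\ldots,\Sigma_{j_n})$. By the interpretation rule of Section~\ref{interpretation}, the target embedding carried by the $k$-th negative wedge is the corresponding regular embedding precomposed by the reversion, hence equal, up to homeomorphisms isotopic to the identity, to $(\theta_k\circ R_{j_k})\circ R_{j_k}=\theta_k\circ R_{j_k}^2$. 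Thus $\mathcal{D}$ presents $(M,[\varphi_1,\ldots,\varphi_m],[\theta_1\circ R_{j_1}^2,\ldots,\theta_n\circ R_{j_n}^2])$.

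Finally I would invoke Remark~\ref{isotopy1}: to conclude that $\mathcal{D}$ presents $C$ it suffices that each $R_g^2$ be isotopic to the identity of $\Sigma_g$. This is the one step carrying genuine content. For $g=0$ it is clear; for $g\ge 1$ I would deduce it from the description of $R_g$ through the polygon $\Pi_g$ together with Proposition~\ref{isotopy1}: the reversion $R=[b_1,a_1,b_g,a_g,\ldots,b_2,a_2]$ acts on the curves $a_i,b_i$ as an involution --- it interchanges $a_1$ with $b_1$, and for $k\ge 2$ carries the $k$-th handle's pair to the $(g{+}2{-}k)$-th handle's pair with $a$ and $b$ interchanged, all orientation-preservingly --- so $R_g^2$ fixes the standard curve system, hence restricts to the identity on the boundary polygonal line of $\Pi_g$, hence is isotopic to the identity. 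I expect this verification to be the main obstacle; it can be bypassed either by bending the target with the inverse reversions $R_{j_k}^{-1}$ (so that the reversion introduced by the blue colour cancels it outright), or by expressing $R_{j_k}^2$ as a product of Dehn twists and absorbing it into additional surgery data by iterated application of Remark~\ref{dehn1}, exactly as in the last paragraph of the proof of Proposition~\ref{empty target}; none of these alternatives disturbs unlinkedness of the wedges. Should the source of $C'$ not list the bent surfaces last, one first rearranges by a suitable operation $(-)^\pi_\tau$ and reverses this at the end.
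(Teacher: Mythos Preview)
Your proposal is correct and follows essentially the same approach as the paper: bend the target to the source via the reversions, invoke Proposition~\ref{empty target}, then recolour the relevant wedges blue and use that $R_g^2$ (the paper's $r\circ r$) is isotopic to the identity. The only difference is that the paper simply asserts this last fact, while you sketch its verification and list fallback options; your justification via the action on the boundary polygonal line of $\Pi_g$ together with Proposition~\ref{isotopy1} is in the spirit of how the paper itself determines $R_g$ up to isotopy.
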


\begin{proof}
Let
\begin{tikzcd}
\Xi_0\arrow[hook]{r}{\varphi_0}
&M \arrow[hookleftarrow]{r}{\varphi_1} & \Xi_1
\end{tikzcd}
be an arbitrary connected cobordism. Define an orientation reversing homeomorphism $r\colon \Xi_1\to\Xi_1$ in terms of reversions $R_g\colon\Sigma_g\to\Sigma_g$ for every $\Sigma_g$ in $\Xi_1$. Consider the following arrow of 3Cob whose target is empty
\begin{center}
\begin{tikzcd}
\Xi_0\arrow[hook]{r}{\varphi_0}
&M \arrow[hookleftarrow]{r}{\varphi_1} & \Xi_1 \arrow[leftarrow]{r}{r} & \Xi_1.
\end{tikzcd}
\end{center}
By Proposition~\ref{empty target}, there exists a diagram $\mathcal{D}$ presenting this cobordism. Then, since $r\circ r$ is isotopic to the identity, one may conclude that the diagram $\mathcal{D}'$, obtained from $\mathcal{D}$ by listing the wedges of circles corresponding to $\Xi_1$ as negative, presents the initial cobordism.
\end{proof}

Every arrow of 3Cob is equal to an arrow of the form
$(C_1\otimes\ldots \otimes C_k)^\pi_\tau$, where $C_1,
\ldots,C_k$ are connected cobordisms and $\pi$, $\tau$ are permutations acting on the domain and the codomain. Hence, every arrow of 3Cob is presentable by a finite sequence of diagrams and two permutations.

\subsection{Diagrammatic calculi}\label{calculus}
It is obvious that two different cobordism diagrams may present the same arrow of 3Cob. In this section we determine the necessary and sufficient conditions under which this happens. For this we use the following notion introduced in \cite{FGOP}. Let $\Xi$ be a closed surface, which is the common boundary of manifolds $M$ and $M'$. We say that $M$ and $M'$ are $\partial$-\emph{equivalent} when there exists an orientation preserving homeomorphism $w\colon M\to M'$ such that for $\iota_M$ and $\iota_{M'}$ being the inclusions, the following diagram commutes:
\begin{center}
\begin{tikzcd}
\Xi\arrow[hook]{r}{\iota_M}\arrow[hook]{dr}[swap]{\iota_{M'}}
&M \arrow{d}{w}\\
&M'.
\end{tikzcd}
\end{center}
This means that $w$ keeps the points of $\Xi$ fixed. The main results of \cite{FGOP} showed that the diagrammatic calculi consisting of some moves, which are introduced in that paper, are complete in the sense that two such diagrams present $\partial$-equivalent manifolds if and only if there is a finite sequence of prescribed moves turning one diagram into the other (see \cite[Theorems~3.1-2 and Proposition~3.3]{FGOP}).

\begin{rem}\label{partial-eq}
Two diagrams with identical sequences of positive and negative wedges of circles present the same arrow of 3Cob if and only if the manifolds presented by these diagrams are $\partial$-equivalent.
\end{rem}

\begin{proof}
Let $M$ and $M'$ be the manifolds presented by two such diagrams. Let $W$ be a wedge of $g$ circles shared by these diagrams, and let $\Xi$ be the boundary of $H_W$. So, $\Xi$ is a common boundary component of $M$ and $M'$. Denote by $\varphi$ a regular embedding of $\Sigma_g$ into $\Xi$, and by $\iota_M$ and $\iota_{M'}$ the inclusions of $\Xi$ in $M$ and $M'$, respectively. This defines the embeddings $\varphi_M=\iota_M\circ\varphi$ and $\varphi_{M'}=\iota_{M'}\circ\varphi$ of $\Sigma_g$ into $M$ and $M'$, respectively. Then, for a homeomorphism $w\colon M\to M'$, the following holds
\[
\varphi_{M'}=w\circ\varphi_M\quad \Leftrightarrow \quad \iota_{M'}=w\circ\iota_M.
\]
By repeating this argument for all wedges, one obtains the above equivalence.
\end{proof}

As a corollary of Remark~\ref{partial-eq} and the results obtained in \cite{FGOP}, we have the following.

\begin{prop}\label{completeness}
Two diagrams with unlinked wedges, and with identical sequences of positive and negative wedges of circles present the same arrow of 3Cob if and only if one of the following three conditions hold: there is a finite sequence of moves
\begin{enumerate}
\item (-1), (0) and (1) from \emph{Figure~\ref{d2}},
\item M1-M5, W1-W4, \cite[Figures~18-19]{FGOP},
\item (-1), (2) and (1), \cite[Figure~27]{FGOP},
\end{enumerate}
transforming one into the other.
\end{prop}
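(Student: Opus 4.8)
The plan is to reduce the statement to the completeness results of \cite{FGOP} via Remark~\ref{partial-eq}. By that remark, two diagrams with identical sequences of positive and negative wedges present the same arrow of 3Cob if and only if the underlying manifolds (obtained by the interpretation, i.e.\ by removing the chosen neighbourhoods of the wedges from $S^3$ and performing surgery) are $\partial$-equivalent, where the common boundary $\Xi$ is the disjoint union of the boundaries of the handlebodies $H_W$ over all wedges $W$. So the whole proposition becomes: \emph{two such diagrams present $\partial$-equivalent manifolds if and only if one of the three move-sequence conditions holds}. This is precisely the content of \cite[Theorems~3.1--3.2 and Proposition~3.3]{FGOP}, once we check that the diagrams in the sense of the present paper fall under the scope of the diagrams considered there.

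First I would record the easy direction: each of the moves (-1), (0), (1) in Figure~\ref{d2}, each of M1--M5, W1--W4 from \cite[Figures~18--19]{FGOP}, and each of (-1), (2), (1) from \cite[Figure~27]{FGOP} preserves the $\partial$-equivalence class of the presented manifold. For the moves imported verbatim from \cite{FGOP} this is exactly what is proved there; for the moves (0) and (1) of Figure~\ref{d2} (and the variant (2)) one observes they are the usual Kirby/Fenn--Rourke moves — blow-ups/blow-downs of $\pm 1$-framed unknots and handle-slides — restricted so as not to disturb the wedges, hence they do not change the surgered manifold rel boundary. Thus if one diagram is obtained from the other by a finite sequence of moves of any one of the three types, the two presented manifolds are $\partial$-equivalent, and by Remark~\ref{partial-eq} the two diagrams present the same arrow.

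For the converse, suppose the two diagrams present the same arrow; by Remark~\ref{partial-eq} the presented manifolds $M$ and $M'$ are $\partial$-equivalent with respect to $\Xi=\partial M=\partial M'$. Here I would invoke \cite[Proposition~2.7]{FGOP} (already used in the proof of Proposition~\ref{empty target}) to see that the diagrams of this paper, after removing the chosen neighbourhoods of all wedges, are exactly presentations of $M$ and $M'$ of the kind to which the completeness theorems of \cite{FGOP} apply — the removed neighbourhoods $H_W$ play the role of the distinguished solid handlebodies there, and the unlinkedness hypothesis matches the standing assumption in that reference. Then \cite[Theorem~3.1]{FGOP} yields a finite sequence of moves of type (2) (the variant using (-1),(2),(1)), \cite[Theorem~3.2]{FGOP} yields one of type (1) (the moves M1--M5, W1--W4), and \cite[Proposition~3.3]{FGOP} (together with the standard fact that blow-ups/handle-slides and the move (2) generate the same equivalence as blow-ups/blow-downs (-1),(0),(1) in the presence of the wedges) yields one of type (1) in the sense of Figure~\ref{d2}. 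Either way, one of the three conditions holds.

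I expect the main obstacle to be the bookkeeping in the converse direction: namely checking carefully that the class of diagrams here — circles-and-wedges diagrams with the geometric normalisations of Section~\ref{standard diagrams}, and with the interpretation removing the chosen neighbourhoods — coincides with (or is a harmless reformulation of) the class of diagrams for which \cite{FGOP} proves completeness, so that the three move-lists transfer without gaps. In particular one must verify that the moves as drawn in Figure~\ref{d2} and \cite[Figure~27]{FGOP} are compatible with the wedge conventions (counterclockwise projections, the handle structure of $H_W$, the $+1$ linking numbers in the identity link of wedges) and that the equivalences generated by lists (1), (2), (3) are genuinely interchangeable, which is where the ``one of the following three conditions'' phrasing comes from; this is essentially a translation exercise, but it is the step that needs care.
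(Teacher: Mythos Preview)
Your approach is correct and is exactly the paper's: the proposition is presented there as a direct corollary of Remark~\ref{partial-eq} together with \cite[Theorems~3.1--3.2 and Proposition~3.3]{FGOP}, with no further proof written out. One small clean-up: in your converse paragraph the labels are scrambled---you call the $(-1),(2),(1)$ list ``type (2)'' and the M1--M5, W1--W4 list ``type (1)''---so straighten out which FGOP result matches which of conditions (1)--(3) before finalising.
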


\begin{figure}[!h]
\centering
\includegraphics{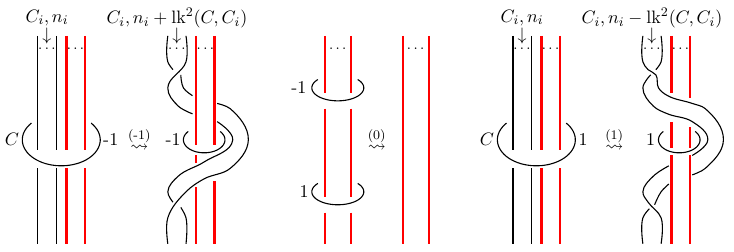}
\caption{The moves (-1), (0) and (1)} \label{d2}
\end{figure}

We will not go into details about the moves listed above since this is not the main subject of this paper and everything is thoroughly explained in \cite{FGOP}. Only the moves (-1), (0) and (1) are illustrated in Figure~\ref{d2}. (The red threads in this figure represent both red and blue threads of our diagrams.) The first two conditions in Proposition~\ref{completeness} treat the integer surgery calculus, save that the second presents a finite list of local moves sufficient for the completeness result. The third condition is devoted to the rational calculus.

Note that Proposition~\ref{completeness} compares only diagrams with identical sequences of positive and negative wedges of circles. The reason is that all the moves listed above keep the wedges of circles fixed. The formulation of Proposition~\ref{completeness} does not limit its application. If two diagrams (presenting arrows from the same hom-set in 3Cob) have no identical sequences of positive and negative wedges, one can use a wedge-rigid isotopy to make the corresponding wedges coincide. The initial diagrams present the same arrow of 3Cob if and only if the new diagrams are such. It remains to apply Proposition~\ref{completeness} to the new diagrams.

The move Twist illustrated in Figure~\ref{twist}, which does not keep the wedges of circles fixed, helps us to link a blue and a red wedge in a form that we will use for composing diagrams. It is straightforward to check that this move does not change the interpretation.
\begin{figure}[h!]
\centering
\includegraphics{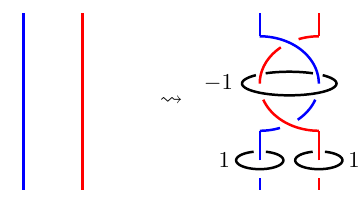}
\caption{The move Twist}\label{twist}
\end{figure}

This finishes our characterisation of the arrows of 3Cob in terms of cobordism diagrams. However, the main goal of this paper is to provide a procedure for composing diagrams, i.e., for two diagrams of composable arrows of 3Cob, to find a diagram of their composition. This goal requires several auxiliary steps.

\subsection{Diagrams within handlebodies and thick surfaces}\label{handlebody}

Apart from standard cobordism diagrams introduced in Section~\ref{standard diagrams}, we need diagrams consisting again of surgery data, but now placed within a handlebody, whose boundary also counts as a component of the outgoing boundary. More precisely, assume that for a wedge $W$ of $g$ circles, not belonging to a diagram $\mathcal{D}$, we have that this diagram is contained in the interior of $H_W$. Then the pair $(H_W,\mathcal{D})$ makes a diagram within a handlebody. Since the boundary of $H_W$ counts as a part of the outgoing boundary, we must indicate by a label its place in the target. Such a diagram is illustrated at the left-hand side of Figure~\ref{handlebody1} (see also the shaded region of the diagram at the right-hand side of Figure~\ref{CD}).
\begin{figure}[!h]
\centering
\includegraphics{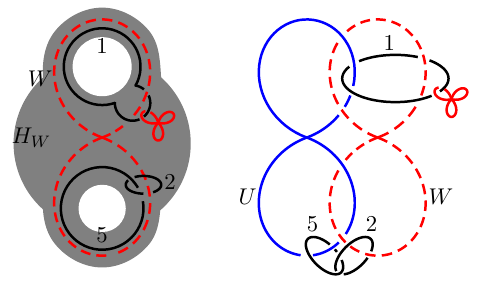}
\caption{A diagram within a handlebody} \label{handlebody1}
\end{figure}

A diagram within a handlebody is interpreted so that we start with $H_W$, remove the chosen neighbourhoods of the wedges of circles and perform the surgery according to the framed link in $\mathcal{D}$. This results in a connected manifold. The orientation of this manifold is induced by the orientation of~$H_W$. The source and the target are defined as in Section~\ref{interpretation}, save that $\Sigma_g$ is added to the target list at the place indicated by the label assigned to $H_W$ in the diagram. The embeddings of the members of the source and the target into the boundary of the manifold are defined as in Section~\ref{interpretation}, save that the embedding of $\Sigma_g$, which is added to the target list, is its regular embedding in the boundary of $H_W$. (Note that this embedding is orientation reversing.) In the case of our example above, we start with a handlebody with two handles---the chosen neighbourhood of the dashed wedge of two circles. Then we remove the chosen neighbourhood of the wedge of three circles and perform surgery along the framed link with three components. A regular embedding of the source $\Sigma_3$ into the boundary of the chosen neighbourhood of the wedge of three circles is orientation preserving and a regular embedding of the target $\Sigma_2$ into the boundary of the ambient handlebody is orientation reversing.

Our goal is to transform a given standard cobordism diagram into a diagram within a handlebody presenting the same cobordism. For example, the standard diagram at the right-hand side of Figure~\ref{handlebody1} (the dashed red wedge should be neglected---its role will become clear in a moment) is transformed into the diagram within a handlebody illustrated at the left-hand side of the same figure.
We call this procedure \emph{inside-out} and it relies on the following remark.

\begin{rem}\label{izvrtanje1}
Let $\mathcal{D}$ be a diagram, which contains an identity link $F$ of wedges (see Remark~\ref{identity diagrams}). Let $U$ be the blue, and $W$ be the red wedge in $F$. Assume that the other wedges in $\mathcal{D}$ are unlinked. Then $\mathcal{D}-U$ could be placed in the interior of $H_W$ by using only wedge-rigid isotopy.
\end{rem}

\begin{proof}
One can start by pulling $F$ (together with the threads of the framed link hanging on the circles in $F$) out of $\mathcal{D}$. See the left-hand side of Figure~\ref{Fig:Pulling}.
\begin{figure}[!h]
\centering
\includegraphics{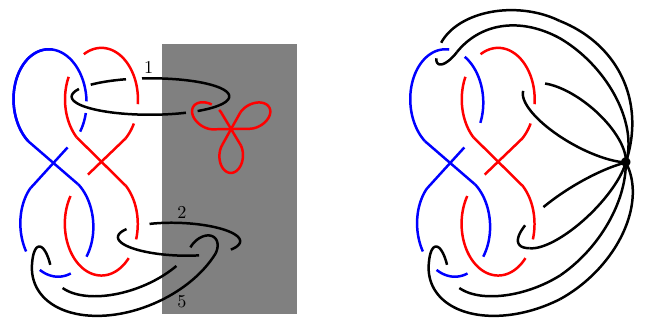}
\caption{}
\label{Fig:Pulling}
\end{figure}

Note that the fundamental group of the complement of $F$ with respect to $R^3$ is generated by the four generators illustrated at the right-hand side of Figure~\ref{Fig:Pulling}. (In the case of wedges of $g$ circles in $F$, there are $2g$ generators of this fundamental group.) By relying essentially on this fact, all the threads coming out of the box (see the left-hand side of Figure~\ref{Fig:Pulling}) and passing through the circles in $F$, could be decomposed into the pieces corresponding to these four generators (see the left-hand side of Figure~\ref{Fig:Decomposition}).
\begin{figure}[!h]
\centering
\includegraphics{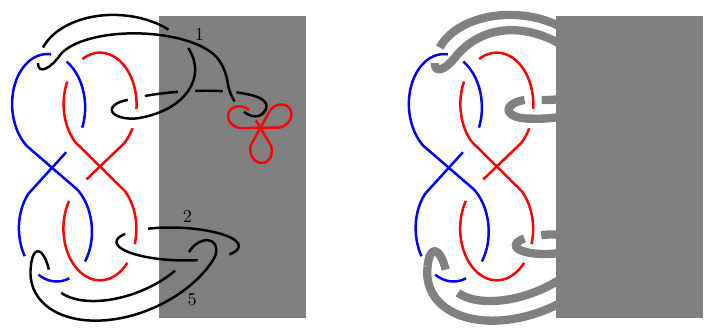}
\caption{}
\label{Fig:Decomposition}
\end{figure}

By doing this, $\mathcal{D}$ could be placed so that $\mathcal{D}-F$ belongs to the shaded region in the right-hand side of Figure~\ref{Fig:Decomposition}. From this, it is clear that $\mathcal{D}-U$ could be placed in the interior of $H_W$. All the moves from above are covered by wedge-rigid isotopy.
\end{proof}

Let $\mathcal{D}''$ be the diagram within $H_W$ obtained by Remark~\ref{izvrtanje1}. By removing the wedge $W$ from $\mathcal{D}''$ we get a diagram $\mathcal{D}'$. Let the label assigned to $H_W$ in $\mathcal{D}'$ be the same as the label assigned to $U$ in $\mathcal{D}$.

\begin{prop}\label{izvrtanje2}
The diagram $\mathcal{D}'$ within the handlebody $H_W$ presents the same cobordism as $\mathcal{D}-W$.
\end{prop}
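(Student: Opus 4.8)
The plan is to produce an orientation-preserving homeomorphism between the manifold underlying $\mathcal{D}'$ within $H_W$ and the one underlying $\mathcal{D}-W$, and to check that it intertwines the source and target embeddings. Write $H_U$ and $H_W$ for the chosen neighbourhoods of $U$ and $W$; by definition these are disjoint genus-$g$ handlebodies. Using Remark~\ref{izvrtanje1} I would first arrange, by a wedge-rigid isotopy, that every wedge of $\mathcal{D}$ other than $U$ and $W$, together with the entire surgery link of $\mathcal{D}$, lies in $\mathrm{int}\,H_W$; the same isotopy is wedge-rigid with respect to $\mathcal{D}-W$, so it changes the interpretation of neither diagram. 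With this arrangement the manifold of $\mathcal{D}-W$ is $S^3$ with $\mathrm{int}\,H_U$ and the chosen neighbourhoods of the remaining wedges $W_1,\dots,W_k$ removed and the surgery performed, while the manifold of $\mathcal{D}'$ is $H_W$ with the chosen neighbourhoods of those same $W_j$ removed and the same surgery performed, together with $\partial H_W$ adjoined to the outgoing boundary in the slot vacated by $U$. In particular $\mathcal{D}-W$ and $\mathcal{D}'$ have the same source and target as sequences of surfaces $\Sigma_g$.

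The heart of the argument is the comparison of $S^3\smallsetminus\mathrm{int}\,H_U$ with $H_W$. Put $N=S^3\smallsetminus(\mathrm{int}\,H_U\cup\mathrm{int}\,H_W)$, so that $S^3\smallsetminus\mathrm{int}\,H_U=H_W\cup_{\partial H_W}N$. Since the diagram $F$ on its own presents $\mj_{\Sigma_g}$ (Remark~\ref{identity diagrams}), unpacking the interpretation gives an orientation-preserving homeomorphism $w\colon\Sigma_g\times I\to N$ with $w(x,0)=\rho_W(x)$ and $w(x,1)=\rho_U(R_g(x))$, where $\rho_W\colon\Sigma_g\to\partial H_W$ and $\rho_U\colon\Sigma_g\to\partial H_U$ are regular embeddings and $R_g$ is the reversion. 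I would then pick a collar of $\partial H_W$ inside $\mathrm{int}\,H_W$ missing all of $W_1,\dots,W_k$ and the surgery link, and absorb $N$ into that collar along the product coordinate supplied by $w$, producing an orientation-preserving homeomorphism $c\colon S^3\smallsetminus\mathrm{int}\,H_U\to H_W$ that is the identity off the collar and satisfies $c|_{\partial H_U}\circ\rho_U\circ R_g=\rho_W$.

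Finally, since $c$ is the identity near each $H_{W_j}$ and near the surgery link, it commutes with the deletions and with the surgery and hence descends to an orientation-preserving homeomorphism $\bar c$ of the two underlying manifolds. For the members of the source and for the members of the target coming from $W_1,\dots,W_k$, the map $\bar c$ is the identity near the relevant boundary components, so it carries their embeddings (regular ones for the source, regular ones precomposed with reversions for the target) to exactly the embeddings prescribed for $\mathcal{D}'$. For the target component in the slot of $U$, the embedding prescribed by $\mathcal{D}-W$ is $\iota\circ\rho_U\circ R_g$, and by the identity above $\bar c$ turns it into $\iota'\circ\rho_W$; since any two regular embeddings of $\Sigma_g$ into $\partial H_W$ differ by an isotopy, and $\mathcal{D}'$ prescribes precisely such a regular embedding in that slot, Remark~\ref{isotopy1} lets me conclude that the two cobordisms are equal.

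The hard part will be the construction of $c$ and the verification of $c|_{\partial H_U}\circ\rho_U\circ R_g=\rho_W$: this is exactly where the hypothesis that $F$ is an \emph{identity} link of wedges enters, through the homeomorphism $w$ of Remark~\ref{identity diagrams}, and where one must keep careful track of the reversion $R_g$ which appears in the target embedding of a blue wedge in a standard diagram but is precisely what is \emph{not} applied in the handlebody interpretation, the discrepancy being absorbed by the collar $N$. The remaining points — that $\bar c$ is well defined through the surgery, that the untouched embeddings really are untouched (for which one needs the remaining wedges to have been pushed off the collar, which is where Remark~\ref{neighbourhood2} is convenient), and the orientation bookkeeping relating the $S^3$-orientation of the manifold of $\mathcal{D}-W$ to the $H_W$-orientation of the manifold of $\mathcal{D}'$ — should be routine once $c$ is in hand.
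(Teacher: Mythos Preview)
Your argument is correct, but it unpacks by hand what the paper disposes of in one categorical stroke. The paper's proof simply observes that the diagram consisting of $U$ and $W$ alone presents the identity $\mj_{\Sigma_g}$ (Remark~\ref{identity diagrams}), and that the manifold of $\mathcal{D}-W$ is literally the gluing of this identity cobordism with the cobordism presented by $\mathcal{D}'$ along $\partial H_W$; since gluing with an identity returns the same arrow of 3Cob, the conclusion follows immediately.

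Your route is the explicit realisation of that sentence: the collar $N=S^3\smallsetminus(\mathrm{int}\,H_U\cup\mathrm{int}\,H_W)\cong\Sigma_g\times I$ is exactly the underlying manifold of $\mj_{\Sigma_g}$, and your absorption map $c$ is the standard witness that $X\cup_{\Sigma}(\Sigma\times I)\cong X$, with the identity $c|_{\partial H_U}\circ\rho_U\circ R_g=\rho_W$ recording precisely how the source and target embeddings of the identity cobordism were defined. What you gain is a self-contained verification that the boundary embeddings match (including the bookkeeping with $R_g$, which you track correctly); what the paper gains is brevity, since it has already paid the cost of defining gluing and the identity arrow in Section~\ref{diagrammatics} and can now cash that in. Both arguments rest on the same input, namely Remark~\ref{identity diagrams}.
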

\begin{proof}
By Remark~\ref{identity diagrams} the diagram consisting of only $U$ and $W$, presents the identity cobordism. Its gluing with the cobordism presented by $\mathcal{D}'$ along $\partial H_W$ results in a cobordism presented by $\mathcal{D}-W$. Hence, $\mathcal{D}-W$ and $\mathcal{D}'$ present the same cobordism.
\end{proof}

The inside-out procedure goes as follows. Consider a diagram with unlinked wedges containing a wedge $U$ of blue circles. Let $\mathcal{D}$ be a result of adding a wedge $W$ of red circles to this diagram, so that $U$ and $W$ form an identity link of wedges, which is unlinked with the other wedges. Then apply the procedure from Remark~\ref{izvrtanje1}. By Proposition~\ref{izvrtanje2}, the obtained diagram $\mathcal{D}'$ within $H_W$, presents the same cobordism as the initial diagram.

\vspace{1ex}

Consider now the diagram $\mathcal{D}''$ within $H_W$ obtained by Remark~\ref{izvrtanje1}. We choose $H'_W$, according to Remark~\ref{neighbourhood2}, so that there exists a regular homeomorphism from $(H_g,W_g)$ to $(H'_{W},W)$ and that $H'_W$, besides $W$, does not intersect the rest of $\mathcal{D}''$. Let us interpret $\mathcal{D}''$ as a diagram within a handlebody, save that we also remove the interior of $H'_W$.  The embedding of $\Sigma_g$ into the boundary of $H'_W$ is just the restriction of the regular homeomorphism from above. Note that, this time, it is orientation preserving. By arguing as in the proof of Proposition~\ref{izvrtanje2} we have the following.

\begin{prop}\label{izvrtanje3}
The diagram $\mathcal{D}''$  presents the same cobordism as $\mathcal{D}$.
\end{prop}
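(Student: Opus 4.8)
The plan is to argue exactly as in the proof of Proposition~\ref{izvrtanje2}, by exhibiting the cobordism presented by $\mathcal{D}$ as the result of gluing an identity cobordism onto the one presented by $\mathcal{D}''$. By Remark~\ref{identity diagrams} the diagram consisting only of the wedges $U$ and $W$ presents the identity $\mj_{\Sigma_g}$; its underlying manifold is $S^3$ with the chosen neighbourhoods of $U$ and $W$ removed, i.e.\ a copy of $\Sigma_g\times I$ in which the $U$-boundary is outgoing (blue) and the $W$-boundary is incoming (red). First I would show that the cobordism presented by $\mathcal{D}$ is obtained by gluing this identity cobordism to the one presented by the reinterpreted $\mathcal{D}''$ along $\partial H_W$, matching the outgoing $\partial H_W$-component of $\mathcal{D}''$ with the incoming ($W$-)component of $\mj_{\Sigma_g}$. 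Since gluing with an identity does not change a cobordism, this yields that $\mathcal{D}''$ and $\mathcal{D}$ present the same cobordism.

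To make the gluing identification precise I would invoke the wedge-rigid isotopy from Remark~\ref{izvrtanje1} --- under which the underlying manifold of $\mathcal{D}$ is unchanged while $\mathcal{D}-U$ comes to lie inside $H_W$ --- together with the handlebody $H'_W\subseteq\mathrm{int}(H_W)$ from Remark~\ref{neighbourhood2}. Cutting the manifold of $\mathcal{D}$ along $\partial H_W$ splits it into two pieces: the piece outside $H_W$, which together with the chosen neighbourhood of $U$ is precisely the complement in $S^3$ of the identity link $U\cup W$, hence $\Sigma_g\times I$; and the piece inside $H_W$, which is $H_W$ with $\mathrm{int}(H'_W)$ (playing the role of the chosen neighbourhood of the isotoped $W$), the chosen neighbourhoods of the remaining wedges, and the surgery of $\mathcal{D}$ all removed, i.e.\ exactly the manifold underlying the reinterpreted $\mathcal{D}''$. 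So the manifold of $\mathcal{D}$ really is this gluing along $\partial H_W$.

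It then remains to match the source and target embeddings. Since $W$ is red and hence positive, in both $\mathcal{D}$ and the reinterpreted $\mathcal{D}''$ the surface $\Sigma_g$ attached to $W$ is a source component carried by an orientation-preserving regular embedding (into $\partial H'_W$ in the case of $\mathcal{D}''$); under the gluing this component, and the source and target components coming from the other, unlinked wedges, are all left untouched, the outgoing $\partial H_W$-component of $\mathcal{D}''$ is consumed against the incoming ($W$-)component of $\mj_{\Sigma_g}$, and the outgoing ($U$-)component of $\mj_{\Sigma_g}$ becomes the outgoing $U$-component of $\mathcal{D}$, sitting at the position inherited by the ambient $H_W$. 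The orientation bookkeeping that singles out this pairing of legs is identical to the one in Proposition~\ref{izvrtanje2}.

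The main obstacle is the geometric identification in the second paragraph: one must check carefully that, after the isotopy of Remark~\ref{izvrtanje1}, deleting $\mathrm{int}(H'_W)$ in the reinterpretation of $\mathcal{D}''$ does exactly the work of deleting the chosen neighbourhood of the shrunk red wedge $W$, with $\partial H'_W$ carrying exactly the regular embedding demanded by the source of $\mathcal{D}$, and that the ambient $H_W$ and the chosen neighbourhood of $U$ genuinely sit in $S^3$ as an identity link so that the outside piece is $\Sigma_g\times I$. Granting this, the conclusion is immediate from the triviality of gluing with an identity, exactly as in Proposition~\ref{izvrtanje2}.
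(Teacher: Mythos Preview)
Your proposal is correct and follows exactly the approach the paper intends: the paper's own proof is the single sentence ``By arguing as in the proof of Proposition~\ref{izvrtanje2} we have the following'', and you have spelled out precisely that argument---glue the identity cobordism coming from the pair $U,W$ onto $\mathcal{D}''$ along $\partial H_W$, and observe that the result is $\mathcal{D}$. Your additional care about matching $\partial H'_W$ with the chosen neighbourhood of the (isotoped) $W$ and about the outside piece being $\Sigma_g\times I$ is exactly what ``arguing as in the proof of Proposition~\ref{izvrtanje2}'' amounts to here, so there is nothing to add.
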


One may envisage the diagram $\mathcal{D}''$ as a collection of wedges and a framed link within the interior of $H_W-H'_W$.  We call such a diagram, a \emph{diagram within a thick surface}. If it is empty, i.e.,\ it contains no surgery data, then it presents the identity arrow $\mj_{\Sigma_g}$.

\section{A surgery for $\Sigma_g\times S^1$}\label{sigmags1}

For our purposes it is important to find a surgery presentation for closed manifolds of the form $\Sigma_g\times S^1$. It is well known that when $g=0$, i.e., when we deal with $S^2\times S^1$, this manifold could be presented by an unknot with framing 0. A bit less familiar are the cases of 3-dimensional torus ($g=1$), which could be presented by the Borromean rings with 0-framing of each component (see \cite[Exercises 5.3.3(d) and 5.4.3(c)]{GS99}), and the general case for arbitrary $g$, which is mentioned in \cite[Proof of Theorem~8]{L93}. We will work out here the general case.

Let us illustrate the case $g=2$, which suffices to clarify the matters. We start with the octagonal prism $P$ illustrated in Figure~\ref{dijagram4}. Let the face-pairing $\epsilon$ be such that the bases of this prism are identified as well as the pairs of lateral facets having the same labels of edges (the vertical edges are supposed to have the same label). The quotient cell complex $P/\epsilon$ corresponds to the manifold $\Sigma_2\times S^1$.
\begin{figure}[h!]
\centering
\includegraphics[width=5.4cm]{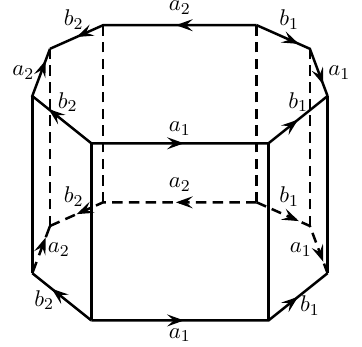}
\caption{Face pairing for $\Sigma_2\times S^1$} \label{dijagram4}
\end{figure}

By using the terminology established in \cite[Section~3]{CFP03} there is a chimney assembly for $P$ with 8 quadrilateral $f$-chimneys inside the lateral facets and two octagonal $f$-chimneys inside the bases. By identifying appropriate $f$-chimneys, one obtains a handlebody with 5 handles, which is a part of a genus five Heegaard splitting for $\Sigma_2\times S^1$.

\begin{definition}
Let $\Xi$ be a connected, orientable surface of genus $g$. A \emph{system of attaching circles} for $\Xi$ is
a set $\{\gamma_1,\ldots,\gamma_g\}$ of simple closed curves on this surface such that:
\begin{enumerate}
\item the curves $\gamma_i$ are mutually disjoint,
\item $\Xi-\gamma_1-\ldots-\gamma_g$ is connected.
\end{enumerate}
Let $\alpha$ and $\beta$ be two systems of attaching circles for $\Xi$. The triple $(\Xi,\alpha,\beta)$ is a \emph{Heegaard diagram}.
\end{definition}

Since the quotient complex $P/\epsilon$, that
is a manifold, has only one vertex, we can apply \cite[Theorem 4.2.1]{CFP03} in order to obtain a Heegaard diagram for this manifold (for the most symmetric form of this diagram see Figure~\ref{dijagram5}).
\begin{figure}[!h]
\centering
\includegraphics[width=5.5cm]{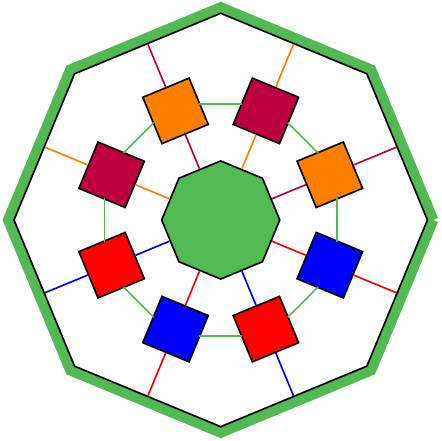}
\caption{A planar Heegaard diagram for $\Sigma_2\times S^1$} \label{dijagram5}
\end{figure}

This diagram is placed on $S^2$ and the Heegaard surface of genus five is obtained by cutting out the coloured regions of the diagram and by identifying their boundaries in an orientation reversing manner induced by the above face-pairing $\epsilon$. One system of attaching circles is made of the identified boundaries of coloured regions, and the other is obtained by concatenating the coloured line segments. A picture of this genus five surface (embedded in $R^3$) together with the Heegaard diagram is complicated but could provide some insight to a careful reader. For example, a Heegaard diagram for $T^3=\Sigma_1\times S^1$ on a genus 3 surface embedded in $R^3$ is illustrated in Figure~\ref{dijagram6}.
\begin{figure}[h!]
\centering
\includegraphics{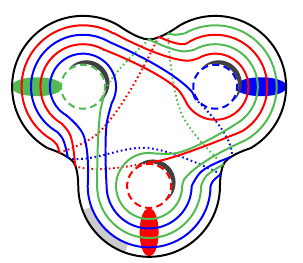}
\caption{Heegaard's diagram for $T^3$}
\label{dijagram6}
\end{figure}

Assume that the diagram from Figure~\ref{dijagram5} is placed on the boundary of a ball. The identification of coloured regions corresponds to adding 1-handles to this ball. In this way one obtains a handlebody, which we denote by $H_2$. We assume that $H_2$ is standardly embedded in $R^3$ and its complement with respect to $S^3=R^3\cup\{\infty\}$ is the handlebody $H_1$.

\begin{definition}\label{meridian}
Let $H$ be a handlebody with $g$ handles. A \emph{meridional disk} is a properly embedded disk in $H$ (its boundary belongs to the boundary of $H$) such that by removing its neighbourhood from $H$ one obtains a handlebody with $g-1$ handles. A \emph{complete system of meridional disks} for $H$ consists of $g$ mutually disjoint disks such that by removing their neighbourhoods from $H$ one obtains a ball.
\end{definition}

Let us concentrate on a half of the diagram from Figure~\ref{dijagram5}. It gives the pattern illustrated in Figure~\ref{dijagram7}, which occurs repeatedly $g$-times in the case of the Heegaard diagram for $\Sigma_g\times S^1$.
\begin{figure}[h!]
\centering
\includegraphics[width=8.8cm]{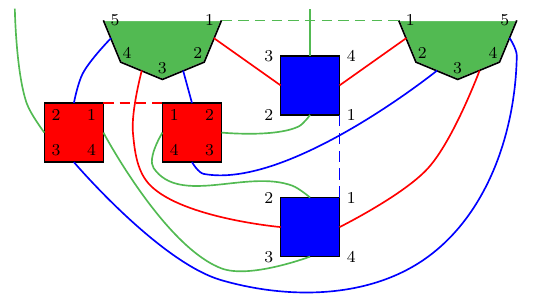}
\caption{The pattern in Heegaard's diagram for $\Sigma_g\times S^1$} \label{dijagram7}
\end{figure}

Let us denote by $\alpha$ the system of attaching circles made of boundaries of coloured regions, and by $\beta$ the system of coloured attaching circles. Let $\gamma$ be the system of attaching circles made of dashed line segments. It is evident that the circles in $\alpha$ bound a complete system of meridional disks in $H_2$ and that the circles in $\gamma$ bound a complete system of meridional disks in $H_1$. Therefore, $(\Xi,\alpha,\gamma)$ is a Heegaard diagram for $S^3$, while $(\Xi,\alpha,\beta)$ is a Heegaard diagram for $\Sigma_g\times S^1$. Both diagrams are of genus $2g+1$. For a surgery presentation of $\Sigma_g\times S^1$, it is important to find Dehn's twists, which map (up to isotopy) the circles from $\gamma$ to the circles from $\beta$.

In order to see how the circles from $\beta$ are linked in $S^3$, one has to make ``bridges'' in Figure~\ref{dijagram7} by using the attached handles. For the pattern illustrated in that figure one obtains a fragment of a link, which is isotopic in $S^3$ to the one illustrated in Figure~\ref{dijagram8}.
\begin{figure}[h!]
\centering
\includegraphics{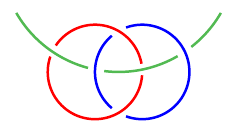}
\caption{The pattern in the link} \label{dijagram8}
\end{figure}

Let $\mathcal{L}$ be the link of circles from $\beta$ in Heegaard's diagram for $\Sigma_g\times S^1$. We conclude that it consists of one distinguished circle, which we call \emph{Brunnian} (the green one in Figure~\ref{dijagram8}), and $2g$ circles coming in pairs, which we call \emph{coupled}. Each pair of coupled circles is linked to the Brunnian one in the form of Borromean rings. The following remark will be used in Section~\ref{mending}.

\begin{rem}\label{positioning}
If we consider the link $\mathcal{L}$ up to isotopy in $S^3-{\rm int}(H)$, where $H$ is the handlebody obtained from $H_2$ by removing the 1-handle with green attaching regions, then each of its patterns is of the form illustrated in Figure~\ref{dijagram9}. Hence the Brunnian circle is unlinked with $H$, while the coupled circles go through the corresponding handles of $H$.
\end{rem}
\begin{figure}[h!]
\centering
\includegraphics[width=6.1cm]{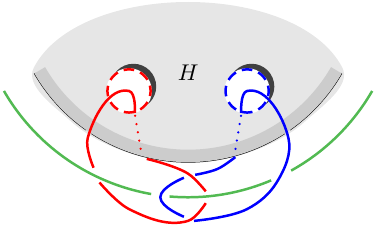}
\caption{}
\label{dijagram9}
\end{figure}

Let $\beta=\{\beta_1,\ldots,\beta_{2g+1}\}$ and $\gamma=\{\gamma_1,\ldots,\gamma_{2g+1}\}$ in the above diagrams be such that for every $i\in\{1,\ldots,2g+1\}$, the circles $\beta_i$ and $\gamma_i$ are of the same colour. Let $\theta_l$ denote Dehn's twist of the Heegaard surface $\Xi$ along the circle $l$. By \cite[Lemma~1]{L62} it follows that the composition
\begin{equation}\label{theta}
\theta=\theta_{\gamma_{2g+1}}\circ \theta_{\beta_{2g+1}}\circ\ldots\circ \theta_{\gamma_1}\circ \theta_{\beta_1},
\end{equation}
which is a self homeomorphism of $\Xi$, maps for every $i\in\{1,\ldots,2g+1\}$ the circle $\gamma_i$ onto $\beta_i$. Therefore, the manifold
$\Sigma_g\times S^1$ is homeomorphic to $(H_1\sqcup H_2)/\sim$, where $\sim$ is such that
\[
   \forall x\in\Xi \quad (x,1)\sim (\theta(x),2).
\]

By the procedure introduced in \cite{L62}, one can replace $(H_1\sqcup H_2)/\sim$ by a surgery in $S^3$ with respect to the link obtained by immersing the curves $\beta_1,\gamma_1,\ldots,\beta_{2g+1},\gamma_{2g+1}$ into $H_1$ level by level so that $\beta_1$ is the deepest. The framing of this link is calculated as follows. Let $l$ be a member of $\beta\cup \gamma$. Choose an orientation of $l$ and introduce a curve $l'$ on $\Xi$, codirected with $l$ such that $l$ and $l'$ bound the annulus where the Dehn twist $\theta_l$ performs. The framing of the link component obtained by immersing $l$ into $H_1$ is by 1 greater than the linking number of $l$ and $l'$. In the above case the framing of every component is 1, and one may imagine that the circles from $\beta$ remain at the same places, while the circles from $\gamma$ are shallowly immersed in $H_2$.

In this way, we obtain a framed link consisting of the link $\mathcal{L}$ introduced above, whose each component is linked with an unknot and all the components have the framing 1. By Kirby's calculus we have
\begin{figure}[!h]
\centering
\includegraphics[width=3.5cm]{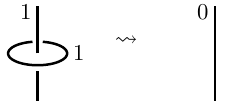}
\end{figure}

\noindent and the link components corresponding to members of $\gamma$ could be cancelled out, leaving the link $\mathcal{L}$ with zero framing of every component. Hence, a surgery presentation of $\Sigma_g\times S^1$ is the one illustrated in Figure~\ref{dijagram10}.
\begin{figure}[h!]
\centering
\includegraphics[width=7.8cm]{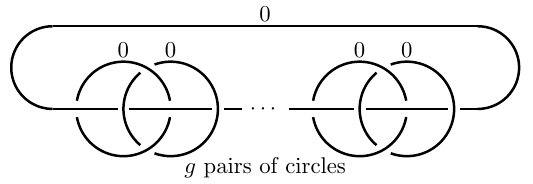}
\caption{$\Sigma_g\times S^1$ surgery} \label{dijagram10}
\end{figure}

\section{How to compose diagrams?}\label{composition}

First, we have to reduce the composition of arrows of 3Cob to an operation on connected arrows of this category. As we noted, 3Cob is symmetric, strictly monoidal with concatenation as the tensor product. Also, every arrow of 3Cob is equal to an arrow of the form $(C_1\otimes\ldots \otimes C_k)^\pi_\tau$, where $C_1,
\ldots,C_k$ are connected arrows and $\pi$, $\tau$ are permutations acting on the domain and the codomain. By bifunctoriality of the tensor, this arrow could be ``developed'' in the form
\begin{equation}\label{developed}
((C_1\otimes\mj)\circ(\mj\otimes C_2\otimes\mj)\circ\ldots\circ (\mj\otimes C_k))^\pi_\tau,
\end{equation}

If one has to compose two arrows of the form~\ref{developed}, then by naturality of symmetry and bifunctoriality of tensor, it suffices to learn how to compose two arrows of the form $C\otimes\mj$ and $\mj\otimes D$ such that
\begin{itemize}
\item[$(\dagger)$] $C$, $D$ and $(\mj\otimes D)\circ(C\otimes\mj)$ are connected.
\end{itemize}
Figures~\ref{dijagram11} and \ref{dijagram12} should convince the reader that this holds. (The cobordisms $C'$ and $D'$ have the same underlying manifolds as $C$ and $D$ respectively, just the embeddings of the targets and the sources are permuted.)
\begin{figure}[h!]
\centering
\includegraphics{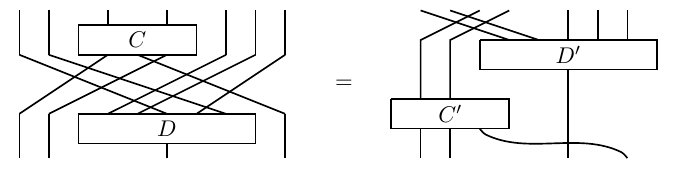}
\caption{$C$ and $D$ not confronted} \label{dijagram11}
\end{figure}
\vspace{-0.3cm}
\begin{figure}[h!]
\centering
\includegraphics{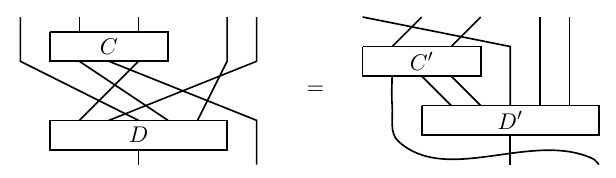}
\caption{$C$ and $D$ confronted} \label{dijagram12}
\end{figure}

The condition $(\dagger)$ reduces the problem of composition of two cobordisms presented by diagrams to the case of two diagrams $\mathcal{D}_C$ and $\mathcal{D}_D$, and a collection of blue wedges in $\mathcal{D}_C$, and the same collection of red wedges in $\mathcal{D}_D$. One has to find a diagram that presents the cobordism obtained by gluing $C$ and $D$ along the boundary components corresponding to these collections of wedges. If our collections consist just of a single wedge, i.e.\ when the gluing surface is connected, then this corresponds to the binary partial operation of \emph{sewing} (in terminology of \cite{S04}) on diagrams. We describe this situation first.

Otherwise, when the gluing surface is not connected, then besides sewing, one needs the unary partial operation of \emph{mending} (in terminology of \cite{S04}) on diagrams. Let us assume that the chosen collection of wedges contains $k>1$ wedges. In that case, a sewing operation is performed first, in order to obtain a diagram $\mathcal{D}$ presenting the cobordism obtained from $C$ and $D$ by gluing along a boundary component corresponding to one member of the chosen collection. It remains to apply $k-1$ operations of mending to the diagram $\mathcal{D}$ in order to obtain the diagram for $(\mj\otimes D)\circ(C\otimes\mj)$.

\subsection{Sewing two diagrams} Suppose we have diagrams $\mathcal{D}_C$ and $\mathcal{D}_D$ with unlinked wedges. Let $C$ and $D$ be cobordisms presented by these diagrams. Let $U$ be a blue wedge in $\mathcal{D}_C$, and let $V$ be a red wedge with the same number of circles in $\mathcal{D}_D$. By gluing $C$ and $D$ along the boundary components corresponding to $U$ and $V$, we obtain a cobordism, and our goal is to find a diagram that presents it. As an illustration, one can take the left-hand side diagram of Figure~\ref{CD} as $\mathcal{D}_D$ and the right-hand side diagram of the same figure as $\mathcal{D}_C$.

First, we apply the inside-out procedure to $\mathcal{D}_C$ with respect to $U$. In Figure~\ref{CD} this gives the diagram within the handlebody indicated by shaded region in $\mathcal{D}_C$. Next, by relying on regular homeomorphisms from $H_2$, to $H_V$ and to the shaded handlebody, we move the diagram within this handlebody to $H_V$, and remove $V$. This results in the left-hand side diagram of Figure~\ref{E}. Finally, in cases with more wedges than in our example, a reordering of wedges left after this sewing must be performed. Namely, the sequence of red (blue) wedges from $D$ should be appended to the sequence of red (blue) wedges in $C$.

\subsection{Mending a diagram}\label{mending} Suppose we have a diagram with unlinked wedges, with a chosen pair (one blue and one red) of wedges with $g$ circles in it. By mending the cobordism presented by this diagram along the incoming and the outgoing boundary component corresponding to the chosen wedges, we obtain another cobordism $C$. Our goal is to find a diagram that presents $C$. As an illustration, one can take the diagram at the left-hand side of Figure~\ref{D} as the initial (this figure is the same as Figure~\ref{E} from Introduction, save that some labels are added).
\begin{figure}[!h]
\centering
\includegraphics[width=12.1cm]{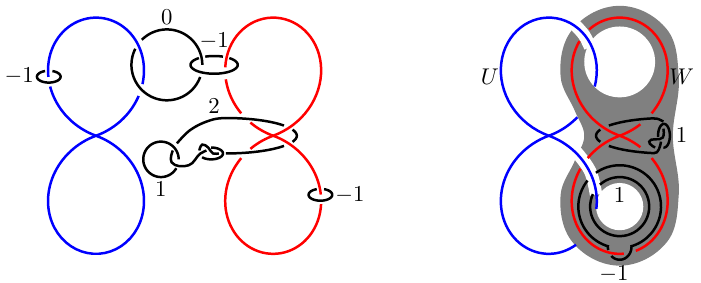}
\caption{The initial diagram and the diagram $\mathcal{D}$}
\label{D}
\end{figure}

First, by applying the move Twist illustrated in Figure~\ref{twist}, the diagram is transformed so that the chosen wedges make an identity link of wedges (see Remark~\ref{identity diagrams}). This could be done in a way that all but these two wedges remain unlinked. Let us denote this diagram by $\mathcal{D}$, and let $U$ and $W$ be the blue and the red wedge making the identity link in $\mathcal{D}$. The initial diagram and the diagram $\mathcal{D}$ present the same cobordism. As an illustration of $\mathcal{D}$ one can take the diagram at the right-hand side of Figure~\ref{D} (just neglect the shading for a moment). Note that moves from Figure~\ref{d2} are used to simplify this diagram.

Next, we apply the procedure from Remark~\ref{izvrtanje1} with respect to this identity link of wedges in order to obtain a diagram $\mathcal{D}''$ within a thick surface (see the end of Section~\ref{handlebody}). The diagram $\mathcal{D}''$ in our example is placed in the shaded handlebody $H_W$ at the right-hand side of Figure~\ref{D}. The thick surface is obtained by removing the interior of $H'_W$ from this handlebody.

We envisage $\mathcal{D}''$ as a surgery data within the interior of $H_W-H'_W$. By Proposition~\ref{izvrtanje3}, the initial diagram and $\mathcal{D}''$ present the same cobordism. The images of the base point of $\Sigma_g$ on the boundaries $\partial H_W$ and $\partial H'_W$ could be connected by a segment. Let us call a tubular neighbourhood of this segment the \emph{channel}. By a wedge-rigid isotopy, the diagram $\mathcal{D}''$ could be dislocated from the channel. The manifold obtained by removing the interior of $H'_W$ and the channel from $H_W$ is a handlebody with $2g$ handles (see Figure~\ref{dijagram17}).
\begin{figure}[h!]
\centering
\includegraphics{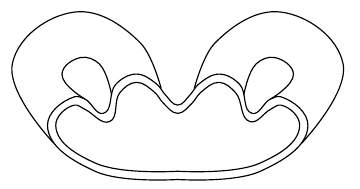}
\caption{} \label{dijagram17}
\end{figure}
\begin{thm}
The cobordism $C$ is presented by a diagram obtained from $\mathcal{D}''$ by neglecting the boundaries $\partial H_W$ and $\partial H'_W$ and by adding a framed link of the form illustrated in Figure~\ref{dijagram10}, save that the coupled circles of this link should be placed as the blue and red circles in Figure~\ref{dijagram18}.
\end{thm}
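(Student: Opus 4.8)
The plan is to check that the two ways of building the claimed diagram and the cobordism $C$ agree --- ``interpret $\mathcal{D}''$ inside the thick surface and then mend'' on one side, ``neglect $\partial H_W$ and $\partial H'_W$, interpret $\mathcal{D}''$ in $S^3$, and adjoin the framed link $\Lambda$ of Figure~\ref{dijagram10}'' on the other --- by localising the comparison to the thick surface $T=H_W\setminus\mathrm{int}(H'_W)$ and then reducing to the surgery computation of Section~\ref{sigmags1} in the model case $\mathcal{D}''=\emptyset$.

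First I would describe the two underlying manifolds. Writing $N$ for the manifold of the cobordism $B$ presented by $\mathcal{D}''$ within the thick surface, Proposition~\ref{izvrtanje3} together with the preceding discussion shows that $C$ is the mending of $B$ along the target component $\partial H_W$ and the source component $\partial H'_W$; so the manifold of $C$ is $N/{\sim}$, where $\sim$ glues $\partial H_W$ to $\partial H'_W$ via the regular embeddings of $\Sigma_g$. On the other side, all wedges and surgery data of $\mathcal{D}''$ lie in $T$, and $E:=\overline{S^3\setminus H_W}$ is a genus-$g$ handlebody with $E\cup_{\partial H_W}T\cup_{\partial H'_W}H'_W=S^3$; hence interpreting $\mathcal{D}''$ as an ordinary cobordism diagram produces $E\cup_{\partial H_W}N\cup_{\partial H'_W}H'_W$, and adjoining $\Lambda$ (which sits in $T$ and, since $\mathcal{D}''$ has been dislocated from the channel, may be taken disjoint from $\mathcal{D}''$) produces the manifold of the proposed diagram, namely $E\cup_{\partial H_W}N_\Lambda\cup_{\partial H'_W}H'_W$, where $N_\Lambda$ is $N$ surgered along $\Lambda$.

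Both passages $N\mapsto N/{\sim}$ and $N\mapsto E\cup N_\Lambda\cup H'_W$ modify $N$ only inside $T$ and the two handlebody caps, and are insensitive to $\mathcal{D}''$ elsewhere: a further wedge-rigid isotopy places all of $\mathcal{D}''$ inside a small ball $\mathbb{B}$ contained in the genus-$2g$ handlebody $\widehat T:=T\setminus(\text{channel})$, which is the handlebody $H$ of Remark~\ref{positioning}, disjoint from the channel and from $\Lambda$, so that each of $N/{\sim}$ and the manifold of the proposed diagram is obtained from the corresponding closed manifold by excising $\mathbb{B}$ and gluing in the cobordism-in-a-ball carved out by $\mathcal{D}''$. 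It therefore suffices to treat $\mathcal{D}''=\emptyset$, i.e.\ $N=\Sigma_g\times I$ with $\varphi,\theta$ the end-inclusions: there $N/{\sim}=\Sigma_g\times S^1$, while $E\cup_{\partial H_W}(\Sigma_g\times I)\cup_{\partial H'_W}H'_W=S^3$ and surgery along $\Lambda$ turns this into $\Sigma_g\times S^1$, which is precisely Section~\ref{sigmags1}. Moreover, by Remark~\ref{positioning} and the prescribed placement of $\Lambda$ in Figure~\ref{dijagram18} (coupled circles through the $2g$ handles of $\widehat T$, Brunnian circle linking the channel) the homeomorphism of Section~\ref{sigmags1} may be chosen to restrict to the standard identification of $\widehat T$ with the corresponding genus-$2g$ handlebody inside $\Sigma_g\times S^1=(\Sigma_g\times I)/{\sim}$; it therefore respects $\mathbb{B}\subset\widehat T$ and matches the self-gluing, so grafting $\mathcal{D}''$ back yields the required homeomorphism for arbitrary $\mathcal{D}''$. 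Finally, the remaining wedges of $\mathcal{D}''$ keep their regular embeddings, their chosen neighbourhoods being disjoint from $\Lambda$, $E$, $H'_W$ and the channel, so the source and target of the proposed diagram agree with those of $C$; a brief orientation check (using that $\varphi$ is orientation-reversing and $\theta$ orientation-preserving, matching the product orientation of $\Sigma_g\times S^1$) finishes the argument.

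The hard part will be the step just invoked: verifying that the Kirby-calculus derivation of Section~\ref{sigmags1} goes through \emph{relative to} the handlebody $\widehat T$, i.e.\ that $\Lambda$ placed as in Figure~\ref{dijagram18} realises the $S^3\to\Sigma_g\times S^1$ surgery by a homeomorphism that is the identity on $\widehat T$ and agrees there with the self-gluing of the thick surface. This is exactly what Remark~\ref{positioning} is meant to supply; pinning down that the displayed positions of the coupled and Brunnian circles are the correct ones, so that the computation of Section~\ref{sigmags1} applies in this relative form, is the crux.
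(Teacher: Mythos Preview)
Your approach is essentially the paper's own: identify the thick surface with $\Sigma_g\times I$, push $\mathcal{D}''$ off the channel into the genus-$2g$ handlebody $\widehat T$ (the paper's $H$), replace the self-gluing $\Sigma_g\times I\to\Sigma_g\times S^1$ by the surgery of Section~\ref{sigmags1}, and invoke Remark~\ref{positioning} to see that this surgery is supported outside $\widehat T$, so the data of $\mathcal{D}''$ are carried along unchanged.

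One point needs correcting. You write that ``a further wedge-rigid isotopy places all of $\mathcal{D}''$ inside a small ball $\mathbb{B}$ contained in $\widehat T$.'' This is false in general: the components of $\mathcal{D}''$ (both framed-link pieces and wedges) may wind essentially through the $2g$ handles of $\widehat T$, and no isotopy of $\widehat T$ will compress them into a ball. The paper does not attempt this; it only moves $\mathcal{D}''$ off the channel, so that it lies in $\widehat T$. Fortunately your own argument does not actually use the ball: in the next sentence you appeal to the fact that the Section~\ref{sigmags1} homeomorphism restricts to the identity on all of $\widehat T$, which is exactly what Remark~\ref{positioning} provides, and this already suffices to ``graft $\mathcal{D}''$ back.'' Simply delete the ball $\mathbb{B}$ and work directly with $\mathcal{D}''\subset\widehat T$; then your outline coincides with the paper's proof.
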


\begin{proof}
We identify the thick surface obtained by removing the interior of $H'_W$ from $H_W$, with $\Sigma_g\times I$ obtained from a prism by identifying pairs of its lateral facets. For example, the case $g=2$ is illustrated in Figure~\ref{dijagram16}, where the lateral facets having the same labels of edges are identified. Such a prism was used in Section~\ref{sigmags1} to describe manifolds $\Sigma_g\times S^1$.
\begin{figure}[h!]
\centering
\includegraphics[width=5.6cm]{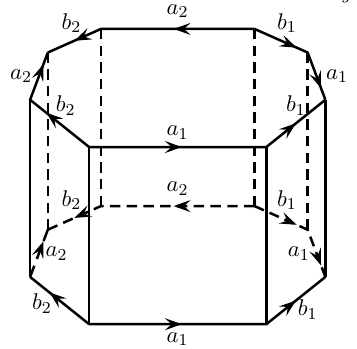}
\caption{Thick $\Sigma_2$} \label{dijagram16}
\end{figure}

Since the surgery data of $\mathcal{D}''$ lie in the interior of $\Sigma_g\times I$, it can be detached from the basis of the prism. Moreover, by wedge-rigid isotopy it can be moved further from the vertical edges, which are all identified into a segment connecting the base points of the two copies of $\Sigma_g$ (the core of the channel). Hence, we may assume that the surgery data of $\mathcal{D}''$ is placed in the interior of the handlebody $H$ introduced in Remark~\ref{positioning}. This handlebody with $2g$ handles is obtained by shaving the bases of the prism (this removes one 1-handle) and by removing the channel. An illustration of $H$ with the channel broaden a bit is given in Figure~\ref{dijagram17}.

By adding one more handle to $H$, we obtain the handlebody $H_2$ introduced in Section~\ref{sigmags1}, Paragraph preceding Definition~\ref{meridian}. The complement of $H_2$ with respect to $S^3$ is the handlebody $H_1$, and $\Xi$ is the common boundary of $H_1$ and $H_2$.

\begin{figure}[h!]
\centering
\includegraphics{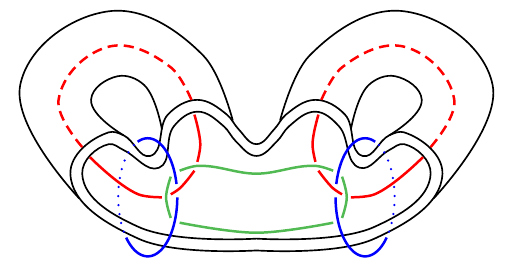}
\caption{} \label{dijagram18}
\end{figure}

The manifold underlying the cobordism $C$ is obtained in the following manner. First identify the bases of the prism in order to obtain $\Sigma_g\times S^1$, and then remove all the chosen neighbourhoods of wedges of circles and perform the surgery according to the diagram $\mathcal{D}''$. The first step of this procedure could be replaced by gluing the handlebodies $H_1$ and $H_2$ so to obtain $\Sigma_g\times S^1$, and this is achieved as in Section~\ref{sigmags1}. The gluing homeomorphism $\theta\colon\Xi\to\Xi$ is equal to the composition of Dehn's twists as in~\ref{theta}.

Note that the curves $\beta$ participating in Heegaard's diagram for $\Sigma_g\times S^1$ from Section~\ref{sigmags1} are immersed into $H_1$, forming a link as the one in Figure~\ref{dijagram10}. The relationship between this link and the handlebody $H$ is described in Remark~\ref{positioning}. This means that with respect of $H$ illustrated in Figure~\ref{dijagram17}, this link is placed as in Figure~\ref{dijagram18}. As calculated in Section~\ref{sigmags1}, every component of this link has 0 as framing. This new surgery in $S^3$ replaces the above gluing of $H_1$ and $H_2$, and the cobordism $C$ is obtained as the interpretation of the surgery data from $\mathcal{D}''$ (placed in the interior of $H$) and the framed link illustrated in Figure~\ref{dijagram10} placed as in Figure~\ref{dijagram18}.
In the example given in Figure~\ref{D} this results in the diagram illustrated in Figure~\ref{F}.
\end{proof}

\medskip

\bigskip
\noindent\textbf{Funding} The authors were supported by the Science Fund of the Republic of Serbia,
Grant No. 7749891, Graphical Languages - GWORDS.
\bigskip
\\
\noindent\textbf{Data Availability} This work has no associated data.
\bigskip
\\
\noindent\textbf{Declarations}
\\
\noindent\textbf{Conflict of interest} The authors have no relevant financial or non-financial interests to disclose.


\begin{thebibliography}{99}

\bibitem{CFP03} {\sc J.W.\ Cannon}, {\sc W.J.\ Floyd} and {\sc W.R.\ Parry}, {\it Heegaard diagrams and surgery descriptions for twisted face-pairing 3-manifolds}, \textbf{\textit{Algebraic \& Geometric Topology}}, vol.\ 3 (2003), pp.\ 235-285

\bibitem{DPZ} {\sc D.\ \D{D}or\D{d}evi\' c}, {\sc Z.\ Petri\' c} and {\sc M.\ Zeki\' c}, {\it A diagrammatic calculus for categorical quantum protocols}, available at arXiv, 2022

\bibitem{FGOP} {\sc B.\ Femi\' c}, {\sc V.\ Gruji\' c}, {\sc J.\ Obradovi\' c}, and {\sc Z.\ Petri\' c}, {\it A calculus for $S^3$-diagrams of manifolds with boundary}, available at arXiv, 2022

\bibitem{FH19} {\sc S.\ Friedl} and {\sc G.\  Herrmann}, \textit{Graphical Neighborhoods of Spatial Graphs}, (J.\ de Gier, C.E./ Praeger and T./ Tao editors) \textbf{\textit{2019-20 MATRIX Annals}}, MATRIX Book Series, vol.\ 4, 2021, pp.\ 627-646

\bibitem{GS99} {\sc R.E.\ Gompf} and {\sc A.I.\ Stipsicz}, \textbf{\textit{4-Manifolds and Kirby Calculus}}, Graduate Studies in Mathematics 20, American Mathematical Society, 1999

\bibitem{J07} {\sc J.\ Johnson}, \textbf{\textit{Notes on Heegaard splittings}}, Available at \url{http://www.math.ucdavis.edu/~jjohnson/notes.pdf}, 2007

\bibitem{J18} {\sc A.\ Juh\' asz}, {\it Defining and classifying TQFTs via surgery}, \textbf{\textit{Quantum Topology}}, vol.\ 9 (2018), pp.\ 229-321

\bibitem{K99} {\sc T.\ Kerler}, {\it Bridged links and tangle presentations of cobordism categories}, \textbf{\textit{Advances in Mathematics}}, vol.\ 141 (1999), pp.\ 207-281

\bibitem{K78} {\sc R.\ Kirby}, {\it A calculus for framed links in $S^3$}, \textbf{\textit{Inventiones Mathematicae}}, vol.\ 45 (1978), pp.\ 35-56

\bibitem{L62} {\sc W.B.R.\ Lickorish}, {\it A Representation of Orientable Combinatorial 3-Manifolds}, \textbf{\textit{Annals of Mathematics}}, vol.\ 76 (1962), pp.\ 531-540

\bibitem{L93} --------, {\it The skein method for three-manifold invariants}, \textbf{\textit{Journal of Knot Theory and its Ramifications}}, vol.\ 2 (1993), pp.\ 171-194

\bibitem{L88} {\sc N.\ Lu}, {\it On the mapping class groups of the closed orientable surfaces}, \textbf{\textit{Topology Proceedings}}, vol.\ 13 (1988), pp.\ 293-324

\bibitem{PS} {\sc V.V.\ Prasolov} and {\sc A.B.\ Sosinski\u \i}, \textbf{\textit{Knots, Links, Braids and 3-Manifolds}}, Translations of Mathematical Monographs 154, American Mathematical Society, 1996

\bibitem{R76} {\sc D.\ Rolfsen}, \textbf{\textit{Knots and Links}}, Publish or Perish, Berkeley, 1976

\bibitem{SSS} {\sc T.\ Saito}, {\sc M.\ Scharlemann} and {\sc J.\ Schultens}, \textbf{\textit{Lecture Notes on Generalized Heegaard splittings}}, Three Lectures on Low-Dimensional Topology in Kyoto, World Scientific, Singapore, 2016

\bibitem{S04} {\sc S.\ Sawin}, {\it Three-dimensional 2-framed TQFTS and surgery}, \textbf{\textit{Journal of Knot Theory and its Ramifications}}, vol.\ 13 (2004), pp.\  947-963

\bibitem{S07} {\sc P.\ Selinger}, {\it Dagger compact closed categories and completely positive maps}, \textbf{\textit{Quantum Programming Languages}}, Electronic Notes in Theoretical Computer Science, vol.\ 170, Elsevier, 2007, pp.\ 139-163

\bibitem{T10} {\sc V.G.\ Turaev}, \textbf{\textit{Quantum Invariants of Knots and 3-Manifolds}}, de Gruyter Studies in Mathematics 18, De Gruyter, 2010

\bibitem{W60} {\sc A.D.\ Wallace}, {\it Modifications and cobounding manifolds}, \textbf{\textit{Canadian Journal of Mathematics}}, vol.\ 12 (1960), pp.\ 503-528

\end{thebibliography}
\end{document}